\theoremstyle{plain}
\newtheorem{theorem}{Theorem}[section]
\newtheorem{lemma}[theorem]{Lemma}
\newtheorem{proposition}[theorem]{Proposition}
\newtheorem{corollary}[theorem]{Corollary}
\theoremstyle{definition}
\newtheorem{definition}[theorem]{Definition}
\newtheorem{example}[theorem]{Example}
\theoremstyle{remark}
\newtheorem{remark}[theorem]{Remark}
\newcommand{\isom}{\cong} 
\newcommand{\R}{\mathbb{R}}
\newcommand{\F}{\mathbb{F}}
\newcommand{\C}{\mathbb{C}}
\newcommand{\Z}{\mathbb{Z}}
\renewcommand{\P}{\mathbb{P}}
\newcommand{\Q}{\mathbb{Q}}
\DeclareMathOperator{\PGL}{PGL}
\newcommand*{\sHom}{\mathscr{H}\kern -.5pt om}
\DeclareMathOperator{\rk}{rk}
\DeclareMathOperator{\Sing}{Sing}
\begin{document}
\title{Endomorphisms of rank one Gorenstein del Pezzo surfaces}

\author{Rohan Joshi}
\address{UCLA Department of Mathematics, Los Angeles, CA 90095-1413}
\email{rohansjoshi@math.ucla.edu}


\begin{abstract}
We prove that, in all except one case, a Gorenstein del Pezzo surface of
Picard rank $1$ admits an int-amplified endomorphism if and only if it
is a quotient of a toric variety by a finite group which acts freely in
codimension one and preserves the open torus. We classify all such
quotients.
\end{abstract}
\maketitle



\section{Introduction}


A surjective endomorphism of a projective algebraic variety $f\colon X
\to X$ is said to be \textit{int-amplified} if there exists an ample
Cartier divisor $A$ on $X$ such that $f^*A - A$ is ample. Equivalently,
$f$ is int-amplified if all eigenvalues of the operator $f^* \colon
N^1(X)_{\C} \to N^1(X)_{\C}$ have magnitude greater than $1$
\cite[Theorem 1.1]{Meng-Building-Blocks}, where $N^1(X)$ is the group
of divisors on $X$ up to numerical equivalence. Roughly speaking, an
endomorphism is int-amplified if it expands the algebraic variety ``in
all directions.''

Whether a given variety admits any int-amplified endomorphism is an
interesting question of algebraic dynamics and seems to be a rare
property. Abelian varieties admit int-amplified endomorphisms (e.g. the
multiplication-by-$m$ endomorphism) and normal toric varieties admit
int-amplified endomorphisms (e.g. the $m$-th power Frobenius morphism).
Meng and Zhong conjecture \cite[Question 1.2]{Meng-Rigidity} that a
smooth projective rationally connected complex variety admits an
int-amplified endomorphism if and only if it is a toric variety (see
also \cite[Question 4.4]{Fakhruddin}). This has been proved for smooth
surfaces \cite[Theorem 3]{Nakayama02} and smooth Fano threefolds
\cite[Theorem 1.4]{Meng-Zhang-Zhong}, \cite[Theorem
6.1]{Totaro-Log-Bott}. 

When we consider singular Fano varieties, we find examples that admit
int-amplified endomorphisms which are not toric, but are finite
quotients of toric varieties.
\begin{example}\label{quaternion-example}
Consider the two-dimensional representation of the quaternion group
given by
\[ 
   i\mapsto \begin{pmatrix} i&0\\0&-i\end{pmatrix},\quad
   j\mapsto \begin{pmatrix}0&1\\-1&0\end{pmatrix},\quad
   k=ij\mapsto \begin{pmatrix}0&i\\i&0\end{pmatrix}.
\]
Extending this to a projective representation in $\PGL(3)$, we can
consider it as an action of $Q_8$ on $\P^2$. Let $X$ be the quotient of
$\P^2$ by this action. Then the image of $[0:0:1]$ is a $D_4$
singularity: $D_4$ singularities are not toric so $X$ cannot be a toric
variety. Nevertheless we can easily see that the ``Frobenius
endomorphism'' $[x:y:z] \mapsto [x^5:y^5:z^5]$ of $\P^2$ commutes with
the action of $Q_8$ and thus descends to an int-amplified endomorphism
of $X$. 
\end{example}

In view of such examples, it is a folklore conjecture that a klt Fano
variety admits an int-amplified endomorphism if and only if it a finite
quotient of a toric variety. Nakayama proved \cite[Lemma
2.6]{NakayamaStruct} that a quotient of a toric surface by a finite
group which acts freely in codimension one and preserves the open torus
inherits an int-amplified endomorphism from the toric variety. Nakayama
also proved that for a klt del Pezzo surface $X$ of Picard rank $>1$,
$X$ admits an int-amplified endomorphism iff $X$ is a quotient of a
toric variety by a finite group acting freely in codimension $1$ such
that the action preserves the open torus \cite[Theorem 1.3]{Nakayama}. 

In this article, I study the case of Picard rank $1$ and consider del
Pezzo surfaces with Gorenstein singularities. Note that an
endomorphism of a projective variety of Picard rank $1$ is int-amplified
if and only if it has degree greater than one, since in this case
$N^1(X)_{\C} \isom \C$ and $f^*: N^1(X)_{\C} \to N^1(X)_{\C}$ is just
multiplication by $(\deg f)^{\dim X}$. Define $S'(E_8)$ as the weighted
sextic surface $\{ X^5Y + X^4Z + Z^3+ W^2 = 0 \} \subset \P(1, 1, 2, 3)$
\cite{Gurjar}: it is a Gorenstein del Pezzo surface of Picard rank 1.
Then the main theorem of this paper is the following.

\begin{theorem}\label{main_theorem} Let $X$ be a Gorenstein del Pezzo
surface of Picard rank $1$ over an algebraically closed field of
characteric zero which is not isomorphic to the surface $S'(E_8)$. Then
$X$ admits an int-amplified endomorphism (i.e. an endomorphism of degree
$>1$) if and only if $X$ is a quotient of a toric variety by a finite
group acting freely in codimension one that preserves the open torus.
\end{theorem}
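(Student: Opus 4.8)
The plan is to combine the classification of Gorenstein del Pezzo surfaces of Picard rank $1$ with a case-by-case analysis of which ones can carry a degree-$>1$ endomorphism. First I would recall the well-known classification (due to work going back to Du Val, with the rank-$1$ list made explicit by Hidaka--Watanabe and refined by Alexeev--Nikulin, Ye, and others): such an $X$ is determined by the configuration of its Du Val (ADE) singularities together with, in a few borderline cases, an extra discrete invariant, and there is a finite explicit list. The direction ``quotient of a toric variety $\Rightarrow$ int-amplified endomorphism'' is already handled by Nakayama's Lemma 2.6 cited in the introduction, so the entire content is the converse: assuming $X$ admits an $f$ with $\deg f>1$, show $X$ is such a quotient (equivalently, produce the finite group and the toric variety), and, along the way, verify that $S'(E_8)$ is genuinely an exception rather than an artifact of the method.

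For the converse I would proceed along the following steps. (1) Pull $f$ back to an equivariant-type setup: let $\pi\colon \widetilde X \to X$ be the minimal resolution, so $\widetilde X$ is a smooth weak del Pezzo surface; an endomorphism of $X$ need not lift to $\widetilde X$, but $f$ being finite of degree $>1$ forces $f^{-1}$ of the singular locus to behave rigidly, and one shows $f$ must permute the singular points and, after iterating, fix each Du Val point and act on a neighbourhood of it. (2) Use the fact that a finite surjective endomorphism of degree $>1$ is, up to iteration, \emph{polarized}, and invoke the structure theory for polarized/int-amplified endomorphisms (Meng--Zhang, Nakayama--Zhang): the pair $(X,\text{boundary of an equivariant MMP})$ is log Calabi--Yau, and the dynamics forces the existence of an $f$-equivariant fibration structure or toric-like stratification. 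Concretely, for del Pezzo surfaces of Picard rank $1$ this funnels into: either $X$ has a $\mathbb{G}_m^2$-action (hence is toric, no quotient needed), or $X$ has exactly a one-parameter group of automorphisms / a pencil structure that $f$ respects, from which one reconstructs a toric surface $Y$ and a finite group $G$ acting freely in codimension one with $X = Y/G$. (3) For the remaining finitely many ``rigid'' surfaces on the list — those with no positive-dimensional automorphism group — one checks directly that a degree-$>1$ endomorphism either does not exist, or its existence already identifies a hidden quotient-of-toric structure; this is where the computations with the explicit equations (as in Example 1.1 and the $S'(E_8)$ equation) enter.

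The main obstacle I expect is Step (3) together with isolating $S'(E_8)$: one must show, for the surfaces whose singularity configurations are ``close to'' but not equal to a quotient-of-toric configuration, that no endomorphism of degree $>1$ exists — typically by a numerical/ramification argument on the minimal resolution (Hurwitz-type formula for $K_{\widetilde X}$, combined with the constraint that $f$ must preserve the very small $-K_X$-degree and the effective cone), and then separately to exhibit, for $S'(E_8)$, that it \emph{does} admit a nontoric endomorphism (or at least that the proof technique genuinely breaks there), so that excluding it is necessary and not merely convenient. A secondary technical point is handling endomorphisms that do not lift to the resolution: I would deal with this by passing to a suitable $f$-equivariant partial resolution or by working with the orbifold/log structure $(X, 0)$ directly and using that $-K_X$ is ample and Cartier (Gorenstein), which tightly constrains the ramification divisor of $f$.
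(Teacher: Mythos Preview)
Your proposal has a genuine gap at Step~(2), and the approach diverges substantially from what the paper actually does.

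The structure theory you invoke (Meng--Zhang, Nakayama--Zhang equivariant MMP, log Calabi--Yau pairs) does not, as far as is known, yield the dichotomy you claim for rank~$1$ surfaces: ``either $X$ carries a $\mathbb{G}_m^2$-action or $f$ respects a pencil from which one reconstructs a toric quotient.'' Nakayama's structural result \cite[Theorem~1.3]{Nakayama} is precisely for Picard rank~$>1$; the rank~$1$ case is exactly what is open and what the theorem is proving. Your Step~(2) is thus circular: you are asserting a structure theorem of the same strength as the conclusion. The Hurwitz/ramification constraints you sketch in Step~(3) are also not known to be sharp enough to rule out endomorphisms on the many rigid surfaces in the list (there are roughly twenty to eliminate, several with simply connected smooth locus).

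The paper's actual method is orthogonal to yours: the key obstruction is \emph{Bott vanishing}. By Kawakami--Totaro, any variety with an int-amplified endomorphism satisfies Bott vanishing for reflexive differentials twisted by ample Weil divisors. The paper then (i) identifies the ten surfaces that \emph{are} toric or toric quotients explicitly, and (ii) for each remaining surface, either exhibits an ample Weil divisor $D$ with $\chi(X,\Omega^{[1]}_X(D))<0$ via a Riemann--Roch formula for normal surfaces (Blache, Langer), or lifts a hypothetical endomorphism to the quasi-universal cover and applies the same obstruction (or Nakayama's rank~$>1$ theorem) there. You do not mention Bott vanishing at all, and it is the engine of the negative direction.

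Finally, you misread the role of $S'(E_8)$: the paper does \emph{not} show it admits a nontoric endomorphism. Rather, $S'(E_8)$ satisfies Bott vanishing (all Weil divisors are Cartier and Cartier Bott vanishing holds by Baker) and has simply connected smooth locus, so every obstruction in the paper's toolkit vanishes; the case is left genuinely open.
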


We give an overview of rank $1$ Gorenstein del Pezzo surfaces below and
their classification by singularity type. To prove Theorem
\ref{main_theorem}, we first identify which rank $1$ Gorenstein del
Pezzo surfaces are quotients of toric surfaces. The ``easy direction''
(that is, the statement that if $X$ is such a quotient of a toric
variety, it admits an int-amplified endomorphism) follows from
\cite[Lemma 2.6]{NakayamaStruct}. For the rest, following Kawakami and
Totaro \cite{Kawakami-Totaro}, we use the tool of Bott vanishing,
combined with lifting to universal covers, to deduce that they do not
admit int-amplified endomorphisms. To find failures of Bott vanishing,
we use a version of the Riemann-Roch formula for normal surfaces. This
formula is automated in a Python program which is in the accompanying
code repository \cite{CodeRepo}.

We work in characteristic zero, but everything in this paper should
hold over an algebraically closed field of characteristic $\neq 2, 3$,
as long as we everywhere replace ``int-amplified endomorphism'' with
``int-amplified endomorphism of degree invertible in the base field''.
The classification of rank $1$ Gorenstein del Pezzo surfaces is the same
over algebraically closed fields of characteristic $\neq 2, 3$
\cite[Theorem B.6]{Lacini}, and all the endomorphisms described in this
paper have degree coprime to $2$ and $3$.

\subsection*{Acknowledgments} I would like to thank Burt Totaro for many
helpful conversations. The author is supported by NSF Graduate Research
Fellowship Grant No. DGE-2034835.

\section{Preliminaries}

A \textit{(log) del Pezzo surface} is a projective klt surface $X$ such
that $-K_X$ is ample. A del Pezzo surface $X$ is \textit{Gorenstein} if
$-K_X$ is Cartier. A del Pezzo surface is Gorenstein if and only if it
has ADE (a.k.a Du Val) singularities, if and only if it has canonical
singularities. The degree of a normal surface $X$ is the intersection
number $(-K_X)^2$. The degree of a Gorenstein del Pezzo surface $d$
satisfies $1 \leq d \leq 9$.

\begin{theorem}\cite[Theorem B.6]{Lacini} Let $X$ be a Gorenstein del
Pezzo surface of Picard rank $1$. Then $X$ is either isomorphic to
$\P^2$ or has one of the singularity types listed below. The surface $X$
is determined up to isomorphism by its singularity type, except in the
cases $E_8$, $A_1+E_7$ and $A_2+E_6$, which have two isomorphism classes
each, and the case $2D_4$, which has infinitely many isomorphism
classes.
\[
\begin{array}{cccccc}
A_1 & A_1+A_2 & A_4 & 2A_1+A_3 & D_5 & A_1+A_5 \\
3A_2 & E_6 & 3A_1+D_4 & A_7 & A_1+D_6 & E_7 \\
A_1+2A_3 & A_2+A_5 & D_8 & 2A_1+D_6 & E_8 & A_1+E_7 \\
A_1+A_7 & 2A_4 & A_8 & A_1+A_2+A_5 & A_2+E_6 & A_3+D_5 \\
4A_2 & 2A_1+2A_3 & 2D_4 & & &
\end{array}
\]
\end{theorem}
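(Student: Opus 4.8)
The plan is to pass to the minimal resolution and reduce the classification to lattice combinatorics on weak del Pezzo surfaces. Let $\pi\colon\tilde X\to X$ be the minimal resolution. Since a Gorenstein del Pezzo surface has Du Val (hence canonical) singularities, $\pi$ is crepant, so $K_{\tilde X}=\pi^{*}K_X$, $-K_{\tilde X}$ is nef and big, and $\tilde X$ is a smooth \emph{weak del Pezzo surface} of the same degree $d=(-K_X)^{2}$. The exceptional divisors of $\pi$ are exactly the irreducible $(-2)$-curves of $\tilde X$; their dual graph is a disjoint union of $ADE$ Dynkin diagrams, one connected component for each singular point of $X$ and of that point's $ADE$ type. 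It therefore suffices to classify pairs consisting of a weak del Pezzo surface $\tilde X$ and its configuration of $(-2)$-curves. I would begin by recalling the classical structure of weak del Pezzo surfaces: $\tilde X$ is $\P^{2}$ if $d=9$; $\P^{1}\times\P^{1}$ or $\mathbb{F}_2$ if $d=8$; and for $d\le 7$ it is the blow-up of $\P^{2}$ at $n=9-d\le 8$ points in almost general position, possibly infinitely near, in which case $\Pic(\tilde X)$ with its intersection form is the standard lattice $I_{1,n}=\langle 1\rangle\oplus n\langle -1\rangle$ and $K_{\tilde X}^{\perp}$ is the negative-definite root lattice $R_n$, equal to $E_n$ (with $E_5=D_5$, $E_4=A_4$, $E_3=A_2+A_1$) for $3\le n\le 8$, while for $n\le 2$ the roots of $K_{\tilde X}^{\perp}$ span a proper sublattice.

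Next I would encode the hypothesis $\rho(X)=1$. Since $X$ is the Du Val (anticanonical) model of $\tilde X$, obtained by contracting \emph{all} the $(-2)$-curves (contracting only some of them would leave $-K_X$ non-ample), and since the exceptional divisors of a resolution of a surface with rational singularities are linearly independent in $\Pic$, we get $\rho(X)=\rho(\tilde X)-\rk(T)$, where $T$ is the $ADE$ type of $\Sing X$ and also the type of the whole $(-2)$-configuration of $\tilde X$. Thus $\rho(X)=1$ forces $\rk(T)=\rho(\tilde X)-1=n$: the $(-2)$-curves of $\tilde X$ are exactly $n$ in number, their classes are roots of $R_n$ forming an $ADE$ diagram of full rank, generating a finite-index root subsystem $T\subseteq R_n$. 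This already settles $d\ge 7$: $\P^{2}$ gives $d=9$; among degree-$8$ surfaces only $\mathbb{F}_2$ has $\rho(X)=1$, its unique $(-2)$-curve contracting to a single $A_1$ point (the quadric cone); and $d=7$ is impossible because $\rk R_2=1<2$. For each $d\in\{1,\dots,6\}$ I would then list the candidate full-rank $ADE$ subsystems of $R_{9-d}$, a finite collection obtained by iterating the Borel--de Siebenthal procedure (delete a node from an extended Dynkin diagram), and decide which of them actually occur.

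The heart of the argument, and the main obstacle, is this last realizability step: a full-rank root subsystem of $R_n$ need not be the $(-2)$-configuration of any weak del Pezzo surface --- for example $7A_1\subseteq E_7$, $D_4+A_1\subseteq D_5$ and $A_1+A_3+D_4\subseteq E_8$ are bona fide subsystems that never occur. For each surviving candidate $T$ one must exhibit an explicit configuration of $n$ points of $\P^{2}$ in almost general position (allowing infinitely near points) whose blow-up has $(-2)$-curves forming precisely $T$, and for each excluded candidate one must derive a contradiction, typically that realizing it forces a configuration not in almost general position (a curve of self-intersection $\le -3$, so $X$ is not Du Val, or a non-$ADE$ configuration) or forces extra $(-2)$-curves (so that $\rho(X)>1$). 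Equivalently, one may argue on the Du Val model via the anticanonical system: for $d\ge 3$ it embeds $X$ as a degree-$d$ surface in $\P^{d}$ (a cubic surface for $d=3$, a complete intersection of two quadrics in $\P^{4}$ for $d=4$, and so on), for $d=2$ it realizes $X$ as a double cover of $\P^{2}$ branched along a plane quartic, and for $d=1$ as a double cover of the quadric cone branched along a sextic, i.e.\ a sextic hypersurface in $\P(1,1,2,3)$; in each case one enumerates the admissible surfaces by the possible singularities of the surface or branch curve and matches them with the lattice list.

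Finally I would prove that $T$ determines $X$ and pin down the exceptions. For a fixed realizable $T$, the marked weak del Pezzo surfaces realizing it form the quotient of the space of point configurations producing $T$ by $\PGL_{3}$ together with the relevant part of the Weyl group; for every type other than the four listed ones this quotient is a single point, so $X\cong\tilde X$ is determined by $T$. The exceptions all occur in degree $1$: for $E_8$, $A_1+E_7$ and $A_2+E_6$ the relevant analytic jet of the branch sextic --- a datum finer than the $ADE$ type of the corresponding singular point of $X$ --- can degenerate in two projectively inequivalent ways, producing two isomorphism classes each, one of which in the $E_8$ case is $S'(E_8)=\{X^{5}Y+X^{4}Z+Z^{3}+W^{2}=0\}\subset\P(1,1,2,3)$. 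For $2D_4$, blowing up the base point of $|-K_X|$ gives a rational elliptic surface with two fibers of type $I_0^{*}=\tilde D_4$; these exhaust its Euler number $12$, so every other fiber is smooth, the fibration is isotrivial, and the $j$-invariant of the general fiber is a genuine modulus --- hence the one-parameter family of isomorphism classes.
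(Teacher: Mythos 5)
You should first note that the paper does not prove this statement at all: it is imported verbatim from Lacini (Theorem B.6), so the only meaningful comparison is with the classification literature your sketch is reconstructing. Your reduction is indeed the standard architecture of that proof: pass to the crepant minimal resolution, use $\rho(X)=\rho(\widetilde X)-\#\{(-2)\text{-curves}\}$ to see that $\rho(X)=1$ forces the $(-2)$-curves to form a full-rank $ADE$ subsystem of $K_{\widetilde X}^{\perp}$, dispose of $d\ge 7$ directly, and enumerate candidates by iterated Borel--de Siebenthal. The genuine gap is that the proposal stops where the theorem begins. The two steps carrying essentially all of the content --- (i) deciding which full-rank subsystems are actually realized as the $(-2)$-configuration of a weak del Pezzo surface, and (ii) proving that each realizable type is realized by a \emph{unique} surface except for $E_8$, $A_1+E_7$, $A_2+E_6$ (two classes each) and $2D_4$ (a one-parameter family) --- are only announced (``one must exhibit\dots'', ``this quotient is a single point''), never carried out. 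Step (ii) is the delicate part: the claim that the relevant configuration space modulo $\PGL_3$ and the Weyl group is a point must be checked type by type (via normal forms of anticanonical models, or the associated rational elliptic fibrations in degree $1$), and it is exactly this analysis that produces the exceptional families. Your $2D_4$ argument (two $I_0^*$ fibers, isotrivial fibration, $j$-invariant as modulus) is right in outline, but the existence of precisely two classes for $E_8$, $A_1+E_7$, $A_2+E_6$ is asserted, not derived.

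A smaller but telling problem: two of your three advertised examples of ``subsystems that never occur'' are not subsystems at all, so they are excluded at the lattice stage rather than by geometry. By Borel--de Siebenthal the only proper full-rank subsystem of $D_5$ is $2A_1+A_3$, so $D_4+A_1\not\subseteq D_5$ (the roots orthogonal to a $D_4$ inside $D_5$ form the empty set); in fact in degree $4$ \emph{both} full-rank candidates are realizable. Likewise $A_1+A_3+D_4\not\subseteq E_8$: such a copy would have to lie in one of the maximal full-rank subsystems $D_8$, $A_1+E_7$, $A_2+E_6$, $A_8$, $2A_4$, and none contains it (the subsystem orthogonal to $D_4$ in $D_8$ is $D_4$, which contains no $A_1+A_3$; the one orthogonal to $D_4$ in $E_7$ is $3A_1$, which contains no $A_3$). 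Comparing the Borel--de Siebenthal lists with the table in the theorem, the genuine realizability failures are $7A_1\subseteq E_7$ (your first example) and $8A_1$, $4A_1+D_4\subseteq E_8$; everything else on the lattice lists in degrees $1$--$6$ does occur. This matters because the realizability and uniqueness verifications are exactly where care is required; with them left open, your text is a faithful summary of the cited proof's strategy, but not a proof of the classification.
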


Here singularity type $2A_1+A_3$, for example, means that the surface has 
three singular points: two $A_1$ singularities and one $A_3$ singularity.

There are $28$ families of Gorenstein del Pezzo surfaces of rank $1$:
$24$ consisting of a single isomorphism class, $3$ consisting of two
isomorphism classes, and one one-parameter family. For all families
consisting of a single isomorphism class, we denote a representative of
that isomorphism class by $S(A_1)$, etc. For families with two
isomorphism classes (i.e. the singularity types $E_8$, $A_1+E_7$, and
$A_2+E_6$), we denote representatives of the two isomorphism classes of
surfaces with that singularity type by $S(E_8)$ and $S'(E_8)$, etc. In
these cases, to distinguish them, $S$ denotes the ``special element''
and $S'$ a ``general element'' of the family. For example, we can define
$S(E_8) \colonequals \{ X^5Y + Z^3+ W^2 = 0 \} \subset \P(1, 1, 2, 3)$
and $S'(E_8) \colonequals \{ X^5Y + X^4Z + Z^3 + W^2 = 0 \} \subset
\P(1, 1, 2, 3)$, and the family is given by $\{ X^5Y + cX^4Z + Z^3 + W^2
= 0 \} \subset \P(1, 1, 2, 3)$ \cite{Gurjar} (see also \cite{DuVal} for
other explicit equations for Gorenstein del Pezzo surfaces). Finally, we
denote the surfaces in the one one-parameter family by $S_t(2D_4)$. 

The minimal resolution of a Gorenstein del Pezzo surface is a
\textit{smooth weak del Pezzo surface} (that is, a smooth surface with
$-K_X$ nef and big). A smooth weak del Pezzo surface is either a blowup
of $\P^2$ at most eight times, or is isomorphic to $\F_0 = \P^1 \times
\P^1$ or $\F_2 = \P_{\P^1}(\mathcal{O} \oplus \mathcal{O}(2))$. If $X$
is a smooth weak del Pezzo surface and $C \subset X$ is a curve, then
since $-K_X.C \geq 0$, the adjunction formula implies the
self-intersection of $C$ is at least $-2$. A $(-1)$- (resp. $(-2)$-)
curve on a surface is a smooth rational curve $C$ with self-intersection
$-1$ (resp. $-2$). Martin and Stadlmayr \cite{Martin-Stadlmayr}
classifies (smooth) weak del Pezzo surfaces that have a global vector
field, and for each draws the arrangement of $(-1)$- and
$(-2)$-curves. Miyanishi and Zhang \cite{Miyanishi-Zhang} draws the
arrangement of $(-1)$- and $(-2)$-curves on the minimal resolutions of
all rank $1$ Gorenstein del Pezzo surfaces of degree $1$. Going in the
other direction, every Gorenstein del Pezzo surface can be obtained as
the anticanonical model of a smooth weak del Pezzo surface, by
contracting all $(-2)$-curves on it. Thus there is a bijection between
isomorphism classes of smooth weak del Pezzo surfaces and Gorenstein del
Pezzo surfaces.

The most important tool we will use in the study of int-amplified
endomorphisms of Gorenstein del Pezzo surfaces is Bott vanishing.

\begin{definition}
Let $X$ be a normal projective variety. $X$ is said to satisfy
\textit{Bott vanishing} (for Weil divisors) if 
\[
H^i(X, \Omega^{[j]}_X(D)) = 0
\]
for all ample Weil divisors $D$, $i > 0$, $j \geq 0$. A Weil divisor $D$ is
ample if some positive multiple $mD$ is an ample Cartier divisor. By
$\Omega^{[j]}(D)$ we mean the reflexive sheaf $(\Omega^j_X \otimes
\mathcal{O}(D))^{**}$. If the vanishing only holds for all $D$ an ample
Cartier divisor, we say $X$ satisfies \textit{Bott vanishing for Cartier
divisors}.
\end{definition}

\begin{theorem}\cite[Theorem C]{Kawakami-Totaro} Let $X$ be a normal
projective variety over a perfect field $k$. If $X$ admits an
int-amplified endomorphism of degree invertible in $k$, then $X$
satisfies Bott vanishing.
\end{theorem}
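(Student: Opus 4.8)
The plan is to adapt the Frobenius-splitting proof of Bott vanishing for toric varieties (Buch--Thomsen--Lauritzen--Mehta) to an arbitrary int-amplified endomorphism. Let $f\colon X\to X$ be int-amplified with $d\colonequals\deg f$ invertible in $k$; then $f$ is automatically finite, and since $\operatorname{char}k\nmid d$ the induced extension of function fields is separable, so $f$ is \'etale over a big open subset of $X$. Fix an ample Weil divisor $D$, an integer $i>0$, and $j\ge 0$; by the definition of ampleness for Weil divisors, $D$ is $\Q$-Cartier, and we fix $m\ge 1$ with $mD$ ample Cartier. The goal is $H^i(X,\Omega^{[j]}_X(D))=0$. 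The idea is to produce an inclusion of $H^i(X,\Omega^{[j]}_X(D))$ into the same group twisted by the extremely positive divisor $(f^*)^nD$, and then to kill the latter by an asymptotic vanishing theorem.

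First, the trace splitting. On the smooth locus $X_{\mathrm{sm}}$ --- a big open over which $D$ is Cartier --- there is the pullback-of-$j$-forms map $\Omega^{j}_X\to f_*\Omega^{j}_X$ and, for the finite generically \'etale morphism $f$, a trace map $f_*\Omega^{j}_X\to\Omega^{j}_X$, whose composite is multiplication by $d$. Tensoring with $\mathcal O_X(D)$ and using the projection formula to identify $f_*\Omega^{j}_X\otimes\mathcal O_X(D)$ with $f_*(\Omega^{j}_X(f^*D))$ yields maps $\Omega^{j}_X(D)\to f_*(\Omega^{j}_X(f^*D))\to\Omega^{j}_X(D)$ with composite $d$. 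Since $\Omega^{[j]}_X(D)$ is reflexive and $f_*(\Omega^{[j]}_X(f^*D))$ satisfies $S_2$ (being the finite pushforward of a reflexive sheaf), these maps extend uniquely across $X$, still composing to $d\cdot\mathrm{id}$; as $d$ is invertible, $\Omega^{[j]}_X(D)$ is an $\mathcal O_X$-module direct summand of $f_*(\Omega^{[j]}_X(f^*D))$. Because $f$ is finite, $H^i(X,f_*\mathcal F)=H^i(X,\mathcal F)$, so $H^i(X,\Omega^{[j]}_X(D))$ is a direct summand of $H^i(X,\Omega^{[j]}_X(f^*D))$; applying this to $f^n$ (which is int-amplified of degree $d^n$, invertible in $k$) gives
\[
\dim_k H^i\!\big(X,\Omega^{[j]}_X(D)\big)\ \le\ \dim_k H^i\!\big(X,\Omega^{[j]}_X((f^*)^nD)\big)\qquad\text{for all }n\ge 0 .
\]

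Next, positivity. The pullback $f^*$ is a linear automorphism of $N^1(X)_{\R}$ (injective since $f_*f^*=d\cdot\mathrm{id}$) mapping the ample cone onto itself, and by \cite[Theorem 1.1]{Meng-Building-Blocks} all of its eigenvalues have magnitude $>1$, so $(f^*)^{-n}\to 0$. Hence for any fixed ample Cartier divisor $H$, the class $(f^*)^nD-H$ lies in the open ample cone iff $D-(f^*)^{-n}H$ does, which holds for $n\gg 0$; and since $m\big((f^*)^nD-H\big)=(f^*)^n(mD)-mH$ is Cartier with ample numerical class, $(f^*)^nD-H$ is an ample Weil divisor for all $n\gg 0$.

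Finally, the conclusion, and the point I expect to be the main obstacle. It suffices to show $H^i(X,\Omega^{[j]}_X((f^*)^nD))=0$ for $n\gg 0$. If $D$ were Cartier this is immediate from Fujita's vanishing theorem: fixing an ample Cartier divisor $H$, Fujita gives $m_0$ with $H^i(X,\Omega^{[j]}_X\otimes\mathcal O_X(m_0H+N))=0$ for all nef Cartier $N$, and by the previous step $(f^*)^nD=m_0H+N$ with $N$ nef Cartier for $n\gg 0$. The genuine difficulty --- and the reason Bott vanishing for all ample \emph{Weil} divisors is harder than for Cartier ones --- is that $(f^*)^nD$ is only $\Q$-Cartier, so $\mathcal O_X((f^*)^nD)$ need not be a line bundle and Fujita does not apply directly. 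I would resolve this via the $f^*$-orbit of the class $[D]$ in $\Cl(X)/\Pic(X)$: every $(f^*)^n[D]$ is $m$-torsion there (as $m(f^*)^nD$ is Cartier), and the $m$-torsion subgroup of $\Cl(X)/\Pic(X)$ is finite --- for a normal projective surface, which is the case relevant here, because the singularities are isolated and each local class group has finite $m$-torsion --- so the orbit is eventually periodic: $(f^*)^{a+p}[D]=(f^*)^a[D]$ for some $a\ge 0$, $p\ge 1$, whence $(f^*)^{a+kp}D=(f^*)^aD+C_k$ with $C_k$ Cartier for all $k\ge 0$. Then $\Omega^{[j]}_X((f^*)^{a+kp}D)\cong\Omega^{[j]}_X((f^*)^aD)\otimes\mathcal O_X(C_k)$, and by the positivity step $C_k$ is ample Cartier with $C_k-m_0H$ nef for $k\gg 0$; applying Fujita's vanishing to the fixed coherent sheaf $\Omega^{[j]}_X((f^*)^aD)$ gives $H^i(X,\Omega^{[j]}_X((f^*)^{a+kp}D))=0$ for $k\gg 0$, which together with the inequality above forces $H^i(X,\Omega^{[j]}_X(D))=0$. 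The steps I expect to need the most care are the construction of the trace map on reflexive differential forms and the finiteness of the $m$-torsion of $\Cl(X)/\Pic(X)$ in the stated generality (which is automatic for surfaces).
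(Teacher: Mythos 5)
First, a point of comparison: the paper does not prove this statement at all --- it is quoted from \cite{Kawakami-Totaro} --- so I am judging your argument against the cited proof. Your first two steps (the trace splitting making $\Omega^{[j]}_X(D)$ a direct summand of $f_*(\Omega^{[j]}_X(f^*D))$ when $\deg f$ is invertible, hence $h^i(X,\Omega^{[j]}_X(D))\le h^i(X,\Omega^{[j]}_X((f^*)^nD))$, and the eigenvalue argument making $(f^*)^nD$ arbitrarily positive) are exactly the strategy of the source, and they are carried out correctly: the trace only needs $f$ to be finite and generically étale with normal target, the maps are defined on a big open and extended by reflexivity, and the composite is $d\cdot\mathrm{id}$ because it is so generically and the sheaves are torsion-free. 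One misstatement here: degree prime to the characteristic gives you generic étaleness (étale over a \emph{dense} open), not étaleness over a \emph{big} open --- an int-amplified endomorphism can ramify along divisors (e.g.\ toric power maps) --- but your argument never actually uses the stronger claim, so this is cosmetic.

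The genuine gap is in your final step, and you half-acknowledge it. To apply Fujita vanishing you reduce to the statement that the orbit of $[D]$ under $f^*$ in $\Cl(X)/\Pic(X)$ is eventually periodic, which you deduce from finiteness of the $m$-torsion of $\Cl(X)/\Pic(X)$. That finiteness is fine for normal projective surfaces over a perfect field (finitely many singular points, and the prime-to-$p$ torsion of each local class group is finite), so your proof is complete in the only case this paper uses. But the theorem as stated is for normal projective varieties of arbitrary dimension, where the singular locus can be positive-dimensional, $\Cl(X)/\Pic(X)$ injects only into a product of local class groups over infinitely many points, and the finiteness of its $m$-torsion is neither proved by you nor obviously true; nothing in your argument substitutes for it (you need at least finiteness of the specific orbit $\{(f^*)^n[D]\bmod\Pic(X)\}$, and the int-amplified hypothesis is not brought to bear on that). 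So as written this is a correct proof of the surface case and an incomplete proof of the stated theorem; to recover the general statement you would either have to establish that orbit finiteness or handle the non-Cartier ample Weil divisors by a different mechanism (as the cited source does) rather than through $\Cl(X)/\Pic(X)$.
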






Baker in his Ph.D. thesis \cite{Baker} studied Bott vanishing for
Gorenstein del Pezzo surfaces using the Leray spectral sequence and
classified those that satisfy Bott vanishing for Cartier divisors.

\begin{theorem}\cite[p.14]{Baker}\label{BakerThm} Let $X$ be a Gorenstein del
Pezzo surface and $\widetilde{X}$ its minimal resolution. Let $n$ be the number
of $(-2)$-curves on $\widetilde{X}$. Then $X$ satisfies Bott vanishing for
Cartier divisors if and only if \[ h^0(\widetilde{X}, T_{\widetilde{X}}) =
10-n-2\rho(X). \]
\end{theorem}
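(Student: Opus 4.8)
The plan is to pass to the minimal resolution $\pi\colon\widetilde X\to X$ and use the Leray spectral sequence to convert Bott vanishing into a statement about $H^\bullet(\widetilde X,T_{\widetilde X})$, which Hirzebruch--Riemann--Roch then rewrites as the displayed equality. First I would reduce to a single cohomology group. On a surface only $H^1$ and $H^2$ can be nonzero and $\Omega^{[j]}_X=0$ for $j>2$, so six pairs $(i,j)$ occur. For $j=0$ and $j=2$, using $\Omega^{[2]}_X=\omega_X$ (valid as $X$ is Gorenstein), the needed vanishing is $H^{>0}(X,\Ocal_X(K_X+L))=0$ with $L$ ($=D$ or $=D-K_X$) ample, which is Kawamata--Viehweg vanishing. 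For $(i,j)=(2,1)$, Serre duality on the Cohen--Macaulay surface $X$ gives $H^2(X,\Omega^{[1]}_X(D))^{\vee}\cong\Hom(\Omega^{[1]}_X(D),\omega_X)=H^0\bigl(X,(\Omega^{[1]}_X)^{\vee}\otimes\omega_X(-D)\bigr)=H^0(\widetilde X,T_{\widetilde X}(K_{\widetilde X}-\pi^*D))$ (using $\pi_*T_{\widetilde X}=(\Omega^{[1]}_X)^{\vee}$ for quotient singularities and the projection formula); since $D$ is effective on the del Pezzo $X$ this is a subsheaf of $T_{\widetilde X}\otimes\omega_{\widetilde X}\cong\Omega^1_{\widetilde X}$, which has no global sections as $\widetilde X$ is rational. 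Hence $X$ satisfies Bott vanishing for Cartier divisors if and only if $H^1(X,\Omega^{[1]}_X(D))=0$ for every ample Cartier divisor $D$.

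Next I would run the Leray spectral sequence for $\pi$ and the locally free sheaf $\Omega^1_{\widetilde X}\otimes\pi^*\Ocal_X(D)$. Since Du Val singularities are rational quotient singularities, $\pi_*\Omega^1_{\widetilde X}=\Omega^{[1]}_X$ and $R^{\ge 2}\pi_*\Omega^1_{\widetilde X}=0$; set $\mathcal Q:=R^1\pi_*\Omega^1_{\widetilde X}$, a torsion sheaf on $\Sing X$ with $\length\mathcal Q=n$ (a local computation at each ADE configuration, equivalent to $h^1(X,\Omega^{[1]}_X)=\rho(X)$ via orbifold Hodge theory). The projection formula gives $R^q\pi_*(\Omega^1_{\widetilde X}\otimes\pi^*\Ocal_X(D))=(R^q\pi_*\Omega^1_{\widetilde X})\otimes\Ocal_X(D)$, so $\mathcal Q\otimes\Ocal_X(D)$ again has length $n$; combining the five-term exact sequence with $H^2(X,\Omega^{[1]}_X(D))=0$ shows the edge map
\[
\phi_D\colon H^1(\widetilde X,\Omega^1_{\widetilde X}(\pi^*D))\longrightarrow H^0(X,\mathcal Q\otimes\Ocal_X(D))
\]
is surjective with kernel $H^1(X,\Omega^{[1]}_X(D))$. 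Thus $X$ satisfies Bott vanishing for Cartier divisors if and only if $h^1(\widetilde X,\Omega^1_{\widetilde X}(\pi^*D))=n$ for every ample Cartier $D$, and by Serre duality on $\widetilde X$ together with the crepancy $\omega_{\widetilde X}=\pi^*\omega_X$ of the minimal resolution, $h^1(\widetilde X,\Omega^1_{\widetilde X}(\pi^*D))=h^1(\widetilde X,T_{\widetilde X}(K_{\widetilde X}-\pi^*D))$.

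It remains to show that the family of vanishings $h^1(\widetilde X,T_{\widetilde X}(K_{\widetilde X}-\pi^*D))=n$ (over all ample Cartier $D$) is equivalent to the single equality $h^1(\widetilde X,T_{\widetilde X})=n$, and that the latter is the stated condition. The translation is bookkeeping: Hirzebruch--Riemann--Roch with Noether's formula $K_{\widetilde X}^2+e(\widetilde X)=12$ gives $\chi(\widetilde X,T_{\widetilde X})=2K_{\widetilde X}^2-10$; $h^2(\widetilde X,T_{\widetilde X})=h^0(\widetilde X,\Omega^1_{\widetilde X}\otimes\omega_{\widetilde X})=0$ because $-K_{\widetilde X}$ is effective (so $\Omega^1_{\widetilde X}\otimes\omega_{\widetilde X}\hookrightarrow\Omega^1_{\widetilde X}$) and $\widetilde X$ is rational; and $\rho(\widetilde X)=10-K_{\widetilde X}^2$ with $n=\rho(\widetilde X)-\rho(X)$. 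Substituting these into $h^0-h^1=\chi$ turns $h^1(\widetilde X,T_{\widetilde X})=n$ into $h^0(\widetilde X,T_{\widetilde X})=10-n-2\rho(X)$. For the equivalence itself, one exploits the exact sequence $0\to T_{\widetilde X}(K_{\widetilde X}-\pi^*D)\to T_{\widetilde X}(K_{\widetilde X})\to T_{\widetilde X}(K_{\widetilde X})|_{\pi^*D}\to 0$ (using that $\pi^*D$ is effective), the vanishing $H^0(\widetilde X,T_{\widetilde X}(K_{\widetilde X}))=H^0(\widetilde X,\Omega^1_{\widetilde X})=0$, and the isomorphism $H^1(\widetilde X,T_{\widetilde X}(K_{\widetilde X}))\cong H^1(\widetilde X,\Omega^1_{\widetilde X})$, which has dimension $\rho(\widetilde X)$, to express $h^1(\widetilde X,T_{\widetilde X}(K_{\widetilde X}-\pi^*D))$ through restriction maps; the extreme case $D\gg 0$ follows from Serre vanishing on $X$, and one propagates down to all ample Cartier $D$ by a monotonicity (semicontinuity) argument pinning everything to $H^1(\widetilde X,T_{\widetilde X})$.

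The main obstacle is precisely this propagation: Bott vanishing is automatic for $D\gg 0$ by Serre vanishing, so the content is that imposing it for \emph{all} ample Cartier $D$ is no stronger and is governed by the single number $h^1(\widetilde X,T_{\widetilde X})$. The usable structural leverage is that the whole obstruction lives inside the length-$n$ skyscraper $\mathcal Q=R^1\pi_*\Omega^1_{\widetilde X}$, reducing the problem to how $T_{\widetilde X}$ (equivalently $\Omega^1_{\widetilde X}$) restricts to the nef-and-big but non-ample divisors $\pi^*D$; the remaining ingredients --- the Leray identifications, the length computation $\length\mathcal Q=n$, and the Euler-characteristic bookkeeping --- are routine.
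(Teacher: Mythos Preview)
The paper does not prove this theorem; it is quoted from Baker's thesis with only the remark that Baker's method is ``the Leray spectral sequence.'' Your framework matches that description, and your steps up through the Riemann--Roch bookkeeping are correct: the reduction of Bott vanishing to the single case $(i,j)=(1,1)$, the Leray identification $\pi_*\Omega^1_{\widetilde X}=\Omega^{[1]}_X$ with $R^1\pi_*\Omega^1_{\widetilde X}$ a length-$n$ skyscraper, and the translation of $h^1(\widetilde X,T_{\widetilde X})=n$ into $h^0(\widetilde X,T_{\widetilde X})=10-n-2\rho(X)$ all go through. In particular, since $-K_X$ is itself ample Cartier and $\Omega^1_{\widetilde X}(\pi^*(-K_X))=T_{\widetilde X}$, the forward implication is immediate: Bott vanishing at $D=-K_X$ alone gives $h^1(\widetilde X,T_{\widetilde X})=n$.

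The gap is exactly where you flag it, and your sketch does not close it. For the reverse implication you must show that the single equality $h^1(\widetilde X,T_{\widetilde X})=n$ forces $h^1(\widetilde X,\Omega^1_{\widetilde X}(\pi^*D))=n$ for \emph{every} ample Cartier $D$. The exact sequence you write compares $T_{\widetilde X}(K_{\widetilde X}-\pi^*D)$ with $T_{\widetilde X}(K_{\widetilde X})=\Omega^1_{\widetilde X}$, i.e.\ with the non-ample case $D=0$, not with $D=-K_X$; its long exact sequence expresses $h^1(\widetilde X,\Omega^1_{\widetilde X}(-\pi^*D))$ through restriction maps of $\Omega^1_{\widetilde X}$ to $\pi^*D$ but yields no inequality bounding it by $h^1(\widetilde X,T_{\widetilde X})$. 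Serre vanishing handles $D\gg 0$, but ``propagating down by monotonicity/semicontinuity'' is not an argument: there is no a priori reason the anticanonical class should dominate an arbitrary ample Cartier $D$ (think of $D=H$ on a high-degree example). You need either a genuine comparison tying every ample Cartier $D$ back to $-K_X$, or a direct argument that $D=-K_X$ maximizes $h^1(X,\Omega^{[1]}_X(D))$ over the ample Cartier cone. As written, only one direction of the ``if and only if'' is established.
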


\begin{corollary}
Let $X$ be a rank $1$ Gorenstein del Pezzo surface. Then $X$ satisfies
Bott vanishing for Cartier divisors iff it is not one of
the following surfaces: $S(E_8)$, $S(A_1+E_7)$, $S(A_2+E_6)$, or
$S_t(2D_4)$.
\end{corollary}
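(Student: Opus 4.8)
The plan is to reduce the statement, via Theorem~\ref{BakerThm}, to a numerical condition on the dimension of the space of global vector fields on the minimal resolution $\widetilde X$, and then to verify that condition family by family using the classification of weak del Pezzo surfaces carrying such vector fields. First I would rewrite Baker's criterion so that it depends only on the degree $d \colonequals (-K_X)^2$. Since $\widetilde X$ is a weak del Pezzo surface of degree $d$ we have $\rho(\widetilde X) = 10 - d$, and since $\widetilde X \to X$ contracts exactly the $(-2)$-curves while $\rho(X) = 1$, the number of $(-2)$-curves is $n = \rho(\widetilde X) - \rho(X) = 9 - d$. Plugging this into Theorem~\ref{BakerThm}, the surface $X$ satisfies Bott vanishing for Cartier divisors if and only if $h^0(\widetilde X, T_{\widetilde X}) = 10 - n - 2\rho(X) = d - 1$. (It is also convenient to note that for a del Pezzo surface with Du Val singularities a global vector field extends over the minimal resolution, so $h^0(\widetilde X, T_{\widetilde X}) = h^0(X, T_X)$, which lets one compute directly on the explicit surfaces given by weighted equations.)

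Next I would verify this equality for each of the $28$ families. For $X = \P^2$ one has $h^0(\P^2, T_{\P^2}) = 8 = 9 - 1$, and for the quadric cone $X = S(A_1)$ the resolution is $\F_2$ with $h^0(\F_2, T_{\F_2}) = 7 = 8 - 1$, so these satisfy Bott vanishing for Cartier divisors. For each of the remaining $26$ singularity types $\widetilde X$ has degree $d \le 6$, and I would read $h^0(\widetilde X, T_{\widetilde X})$ off the classification of Martin--Stadlmayr \cite{Martin-Stadlmayr} of weak del Pezzo surfaces with a nonzero global vector field (cross-checking the degree-$1$ cases against the curve arrangements of Miyanishi--Zhang \cite{Miyanishi-Zhang}), first matching each resolution to its table entry via the configuration of $(-1)$- and $(-2)$-curves. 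The outcome is that $h^0(\widetilde X, T_{\widetilde X}) = d - 1$ for every family of degree $d \ge 2$, and $h^0(\widetilde X, T_{\widetilde X}) = 0$ for every family of degree $1$, \emph{except} for $S(E_8)$, $S(A_1 + E_7)$, $S(A_2 + E_6)$ and each member of the one-parameter family $S_t(2D_4)$, for which $h^0(\widetilde X, T_{\widetilde X}) = 1$. For instance $S(E_8) = \{X^5Y + Z^3 + W^2 = 0\} \subset \P(1,1,2,3)$ admits the $\Gm$-action $(X,Y,Z,W) \mapsto (tX, t^{-5}Y, Z, W)$, while the same computation for the general member $S'(E_8) = \{X^5Y + X^4Z + Z^3 + W^2 = 0\}$ produces only the trivial action, in agreement with $h^0(\widetilde X, T_{\widetilde X}) = 0$ there. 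Comparing with the criterion $h^0(\widetilde X, T_{\widetilde X}) = d - 1$ from the first step produces exactly the claimed list of exceptions.

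The main obstacle is the bookkeeping in the second step. Although there are only $28$ families, the types $E_8$, $A_1 + E_7$, $A_2 + E_6$ each split into two isomorphism classes and $2D_4$ forms a continuous family, so one must locate more than $28$ resolutions inside the Martin--Stadlmayr tables by their curve configurations and extract $h^0(\widetilde X, T_{\widetilde X})$ in each. The two most delicate points are (i) checking that \emph{every} member of the family $S_t(2D_4)$, not merely the generic one, carries a global vector field, so that the whole family is excluded; and (ii) in the families of degree $\ge 2$, confirming the precise equality $h^0(\widetilde X, T_{\widetilde X}) = d - 1$ rather than only nonvanishing. Both are resolved once the resolutions are correctly identified in \cite{Martin-Stadlmayr}.
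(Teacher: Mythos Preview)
Your proposal is correct and follows essentially the same route as the paper, which simply cites \cite[Proposition 2.7.2]{Baker}; you are spelling out the verification that Baker carries out there, namely reducing Theorem~\ref{BakerThm} to the condition $h^0(\widetilde X,T_{\widetilde X})=d-1$ and checking it case by case against the Martin--Stadlmayr tables. The numerical rewriting and the identification of the four exceptional families are exactly as in Baker's argument.
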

\begin{proof}
See \cite[Proposition 2.7.2]{Baker}
\end{proof}

There are finitely many families of Gorenstein del Pezzo surfaces: a
complete classification can be found in \cite{Dolgachev}. Baker's thesis
contains tables which indicate which families satisfy/fail Bott
vanishing for Cartier divisors. On page 14 of \cite{Baker}, there are
two tables: the first table describes Gorenstein del Pezzo surfaces of
degree $>1$ whose minimal resolution has a global vector field (these
weak del Pezzo surfaces are classified in \cite{Martin-Stadlmayr}), and
the second describes those whose minimal resolution does not have a
global vector field. The second table has some gaps which we correct
here. We have:

\begin{center}
\begin{tabular}{p{1.5cm} p{7cm} p{7cm}}
\hline
\textbf{Degree} & \textbf{Satisfies Cartier BV} & \textbf{Fails Cartier
BV} \\
\hline
5 & $\emptyset$ &  \\
4 & $2A_1$, $A_2$ & $A_1$, $\emptyset$ \\
3 & $D_4$, $A_4$, $A_3+A_1$, $A_2+2A_1$, $4A_1$ & $A_3$, $A_2+A_1$,
$3A_1$, $A_2$, $2A_1$, $A_1$, $\emptyset$ \\
2 & $E_6$, $D_6$, $D_5+A_1$, $D_4+2A_1$, $A_6$, $A_5+A_1$, $A_4+A_2$,
 $2A_3$, $A_3+A_2+A_1$, $A_3+3A_1$, $3A_2$, $6A_1$ & $D_5$, $D_4+A_1$, 
 $A_5$, $A_3+A_2$, $2A_2+A_1$, $A_4+A_1$, $A_3+2A_1$, $A_2+3A_1$,
 $5A_1$, $D_4$, $2A_2$, $A_4$, $A_3+A_1$, $A_2+2A_1$, $4A_1$, $A_3$,
 $A_2+A_1$, $3A_1$, $A_2$, $2A_1$, $A_1$, $\emptyset$ \\
\hline
\end{tabular}
\captionof{table}{Gorenstein del Pezzo surfaces $X$ with $(-K_X)^2 > 1$
and $h^0(\widetilde{X}, T_{\widetilde{X}}) = 0$}
\hfill
\end{center}
\hfill

However, failure of Bott vanishing for Cartier divisors is a weaker tool
than Bott vanishing for Weil divisors for the purpose of studying
endomorphisms. In the case of rank $1$ Gorenstein del Pezzo surfaces,
three of the 30 rigid Gorenstein del Pezzo surfaces fail Bott vanishing
for Cartier divisors (in addition to all surfaces $S_t(D_4)$), but at
least fourteen fail Bott vanishing (for Weil divisors).


\section{Quotients of toric surfaces}


\begin{proposition}\label{toric} The five surfaces $\P^2$, $S(A_1)$,
$S(A_1+A_2)$, $S(2A_1+A_3)$ and $S(3A_2)$ are toric, and thus admit
int-amplified endomorphisms.
\end{proposition}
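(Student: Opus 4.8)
The plan is to produce an explicit toric model for each of the five surfaces and then invoke the uniqueness part of the classification theorem. The surface $\P^2$ is of course toric. For the other four I would use
\[
S(A_1)\colon \P(1,1,2),\quad S(A_1+A_2)\colon \P(1,2,3),\quad S(2A_1+A_3)\colon \P(1,1,2)/\mu_2,\quad S(3A_2)\colon \P^2/\mu_3,
\]
where $\mu_2$ acts on $\P(1,1,2)$ by $(x,y,z)\mapsto(x,-y,-z)$ and $\mu_3$ acts on $\P^2$ by $(x,y,z)\mapsto(x,\zeta y,\zeta^2 z)$ with $\zeta$ a primitive cube root of unity. In each quotient case the acting group is a subgroup of the torus of the ambient toric surface and fixes only the three torus-fixed points; hence the quotient is again toric and klt, and $-K$ is ample on it because the quotient map is finite, surjective, and unramified in codimension one. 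So all four candidates are klt del Pezzo surfaces.

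I would then record their invariants. Each has Picard rank $1$, being a toric surface whose fan has three rays. The anticanonical degrees are $8$, $6$, $4$, $3$: for the weighted projective planes from $(-K)^2=(\sum a_i)^2/\prod a_i$, and for the quotients from $(-K_{X/G})^2=\tfrac1{|G|}(-K_X)^2$. The singularity types are read off from the cyclic quotient singularities at the torus-fixed points: $\P(1,1,2)$ has one $\tfrac12(1,1)=A_1$ point; $\P(1,2,3)$ has a $\tfrac12(1,1)=A_1$ point and a $\tfrac13(1,2)=A_2$ point; $\P^2/\mu_3$ has three $\tfrac13(1,2)=A_2$ points; and $\P(1,1,2)/\mu_2$ has two $\tfrac12(1,1)=A_1$ points together with, at the image of $[0:0:1]$, a $\tfrac14(1,3)=A_3$ point (there the $\mu_2$-action combines with the residual $\mu_2$-stabilizer of the weighted chart of $\P(1,1,2)$ to generate a cyclic group of order $4$). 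In particular all four singularity types are Du Val, so the surfaces are Gorenstein, of types $A_1$, $A_1+A_2$, $2A_1+A_3$, $3A_2$.

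Since none of these four types is among the exceptional cases $E_8$, $A_1+E_7$, $A_2+E_6$, $2D_4$ of the classification theorem, each corresponds to a unique rank $1$ Gorenstein del Pezzo surface; hence the toric model above really is $S(A_1)$, $S(A_1+A_2)$, $S(2A_1+A_3)$, or $S(3A_2)$, and together with $\P^2$ all five surfaces of the proposition are toric. Finally, a normal toric variety admits an int-amplified endomorphism — for instance the $m$-th power endomorphism for any integer $m\ge 2$, which has degree $m^2>1$ on a toric surface of Picard rank $1$ — by \cite[Lemma 2.6]{NakayamaStruct} applied with the trivial group, which completes the proof.

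I expect the one subtle point to be the singularity computation for $\P(1,1,2)/\mu_2$ at the image of $[0:0:1]$: one must compose the $\mu_2$-stabilizer of the weighted affine chart with the renormalization of $(x,y,z)\mapsto(x,-y,-z)$ needed to preserve that chart and notice that together they generate $\mu_4$ rather than $\mu_2\times\mu_2$, which is what turns the naively expected $A_1$ into an $A_3$. The remaining checks (Picard rank, degree, and the other quotient singularities) are routine. If one prefers to avoid this bookkeeping, one can instead present $S(2A_1+A_3)$ and $S(3A_2)$ by their fans, with rays $(-1,-1),(1,-1),(-1,3)$ and $(-1,-1),(2,-1),(-1,2)$ respectively, and read off the $A_{n-1}$ types from the determinants (here $2,2,4$ and $3,3,3$) of the two-dimensional cones, using that a two-dimensional cone of determinant $n$ in a Gorenstein toric surface contributes an $A_{n-1}$ singularity.
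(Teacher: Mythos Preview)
Your proof is correct and follows essentially the same line as the paper's: exhibit each surface as an explicit toric variety and invoke uniqueness from the classification. The only cosmetic difference is that the paper gives $S(2A_1+A_3)$ and $S(3A_2)$ directly by their fans (and relegates the quotient descriptions $\P^2/\mu_4$ and $\P^2/\mu_3$ to subsequent remarks), whereas you lead with the quotient models $\P(1,1,2)/\mu_2$ and $\P^2/\mu_3$ and offer fans as the alternative; your $\P(1,1,2)/\mu_2$ is just the paper's $\P^2/\mu_4$ factored through the intermediate $\mu_2$-quotient, and your singularity analysis at $[0:0:1]$ is exactly the point the paper's remark handles by observing that the square of the $\mu_4$-generator acts as a quasi-reflection there.
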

\begin{proof}
$\P^2$ is of course toric, and $S(A_1)$ is the projective closure of the
quadric cone, which is a toric variety: it can also be described as the
weighted projective space $\P(1, 1, 2)$. $S(A_1+A_2)$ can be described
as the weighted projective space $\P(1, 2, 3)$. It is easy to see these
three surfaces are precisely the weighted projective planes with at
worst canonical singularities. $S(2A_1+A_3)$ and $S(3A_2)$ are given
respectively by the following complete fans. \\

\begin{minipage}{0.45\textwidth}
\begin{center}
\begin{tikzpicture}[scale=0.8]
    \foreach \x in {-2,...,2} {
        \foreach \y in {-2,...,2} {
            \fill[black] (\x,\y) circle (1pt);
        }
    }
    \draw[->, thick] (0,0) -- (1,2);
    \draw[->, thick] (0,0) -- (1,-2);
    \draw[->, thick] (0,0) -- (-1,0);
\end{tikzpicture}
\end{center}
\end{minipage}
\hfill
\begin{minipage}{0.45\textwidth}
\begin{center}
\begin{tikzpicture}[scale=0.8]
    \foreach \x in {-2,...,2} {
        \foreach \y in {-2,...,2} {
            \fill[black] (\x,\y) circle (1pt);
        }
    }
    \draw[->, thick] (0,0) -- (-2,1);
    \draw[->, thick] (0,0) -- (1,1);
    \draw[->, thick] (0,0) -- (1,-2);
\end{tikzpicture}
\end{center}
\end{minipage}

\end{proof}

\begin{remark}
We can also describe $S(2A_1+A_3)$ as a quotient of $\P^2$. Consider the
two dimensional linear representation of $\Z/4\Z$ where the generator
acts by the matrix
\[
\begin{bmatrix}
i & 0 \\
0 & -i
\end{bmatrix}
\]
Then we can extend this to a projective representation in $\PGL(3)$ and
consider it as an action of $\Z/4\Z$ on $\P^2$. Define $X$ as the 
quotient of $\P^2$ by this action. The image of the origin $[0:0:1]$ in
$X$ is an $A_3$ singularity. The square of the generator (but not the
generator) acts as a quasi-reflection locally around the two points
$[1:0:0]$ and $[0:1:0]$, so the images of $[1:0:0]$ and $[0:1:0]$ are
$A_1$ singularities. There are no other singular points on $X$, so $X
\isom S(2A_1+A_3)$.
\end{remark}

\begin{remark}
Similarly, we can describe $S(3A_2)$ as a quotient of $\P^2$. Consider
the representation of $\Z/3\Z$ where the generator acts by the matrix
\[
\begin{bmatrix}
\omega & 0 \\
0 & \omega^2
\end{bmatrix}
\]
where $\omega = e^{2\pi i /3}$. Extend this to an action of $\Z/3\Z$ on
$\P^2$ (so the generator sends $[x:y:z] \mapsto [\omega x: \omega^2 y:
z]$) and let $X$ be the quotient of $\P^2$ by this action. We can see
that the images of $[0:0:1]$, $[1:0:0]$ and $[0:1:0]$ are $A_2$
singularities and there are no other singular points. So $X \isom
S(3A_2)$.
\end{remark}

These five are the only toric rank one Gorenstein del Pezzo surfaces, by
the classification of toric Gorenstein del Pezzo surfaces
\cite{ToricGDP}.

\begin{proposition}\label{toric_quotient} The five surfaces
$S(3A_1+D_4)$, $S(A_1+2A_3)$, $S(A_1+A_2+A_5)$, $S(4A_2)$ and
$S(2A_1+2A_3)$ are quotients of toric surfaces by a finite group that
acts freely in codimension one and preserves the open torus.
\end{proposition}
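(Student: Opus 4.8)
The plan is to treat the five surfaces one at a time, in each case exhibiting an explicit toric surface $Y$ together with an explicit finite group $G$ acting on $Y$, and then checking that the action has the required properties and that the quotient $Y/G$ is the right surface. I will always take $G$ inside $T\rtimes\Aut(\Sigma)$, where $T$ is the torus of $Y$ and $\Aut(\Sigma)$ is the finite group of automorphisms of the fan of $Y$; such a $G$ automatically preserves the open torus. The candidates for $Y$ are $\P^2$, $\P^1\times\P^1$, the smooth toric del Pezzo surface $dP_6$ of degree $6$, and the rank one toric Gorenstein del Pezzo surfaces of Proposition~\ref{toric}. For instance, $S(4A_2)$ will be realized as the quotient of $\P^2$ by the subgroup $(\Z/3)^2\subset\PGL_3$ coming from the Heisenberg representation (generated by $[x:y:z]\mapsto[y:z:x]$ and $[x:y:z]\mapsto[x:\omega y:\omega^2 z]$, $\omega=e^{2\pi i/3}$), and $S(A_1+2A_3)$ as the quotient of $\P^1\times\P^1$ by the order four group generated by the $90^\circ$ rotation $(x,y)\mapsto(y,1/x)$; the remaining three surfaces are handled similarly, as quotients of $\P^1\times\P^1$, of $dP_6$, or of the surfaces of Proposition~\ref{toric} by suitable finite groups.

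In each case the verification runs through the same steps. First one checks that $G$ acts freely in codimension one: a nontrivial automorphism of a surface whose fixed locus contains a curve acts on a general point of that curve as a quasi-reflection, so it suffices to rule out elements of $G$ with a one-dimensional fixed locus, and this simply constrains the torus-translation part of $G$ (generically composing a fan reflection with a translation by $(-1,1)$ or $(-1,\beta)$ destroys its fixed curves). Second one computes the degree $(-K_{Y/G})^2=(-K_Y)^2/|G|$, using $\pi^*K_{Y/G}=K_Y$ (because $\pi$ is étale in codimension one) and the projection formula, and the Picard rank $\rho(Y/G)=\dim_{\Q}N^1(Y)_{\Q}^G$, which is arranged to equal $1$. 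Third one analyzes the singularities: at each $G$-orbit with nontrivial stabilizer $H$, the germ of $Y/G$ is $\C^2/\widetilde H$, where $\widetilde H$ is the extension of $H$ by the cyclic local fundamental group of $Y$ at that point, and one checks that $\widetilde H$ is a finite subgroup of $\SL_2$ — i.e. the singularity is Du Val — of the predicted type, and that there are no other singular points. Once this is done, $Y/G$ is a Gorenstein del Pezzo surface of Picard rank $1$ with the prescribed singularity type, and since none of the five types in question ($3A_1+D_4$, $A_1+2A_3$, $A_1+A_2+A_5$, $4A_2$, $2A_1+2A_3$) is among the ambiguous types $E_8$, $A_1+E_7$, $A_2+E_6$, $2D_4$ of the classification \cite[Theorem B.6]{Lacini}, it is isomorphic to the named surface.

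The step I expect to be the main obstacle is the singularity analysis for $S(3A_1+D_4)$. A $D_4$ singularity has local fundamental group the quaternion group $Q_8$ of order $8$, so it cannot arise from a group of order less than $8$ acting at a smooth point, nor from any toric (hence cyclic-quotient) point; the only way to produce it with a small quasi-étale quotient is to start from a toric surface with an $A_3$ point (local group $\Z/4$) and take a further $\Z/2$-quotient realizing the unique nonabelian extension $1\to\Z/4\to Q_8\to\Z/2\to1$. Concretely this forces $Y=S(2A_1+A_3)$ and $G=\Z/2$, with the generator of $G$ taken as a twisted form of the fan reflection swapping the two rays of the $A_3$-cone, the twist chosen precisely so that its square acts as $-I$ on the local tangent space — so that the local group is $Q_8$ and not $\Z/8$ or a dihedral group (which would contain a quasi-reflection). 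Verifying this, while simultaneously tracking how this involution permutes the remaining singular and smooth fixed points so that the quotient acquires exactly three $A_1$ points, is the delicate part; the analogous orbit bookkeeping in the other cases — checking that the stabilizer contributions assemble into the claimed singularity type, and in the degree one cases that the resulting $(-2)$-curves total the expected rank $8$ — is likewise what makes them more than a formality.
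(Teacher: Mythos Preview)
Your approach is essentially the paper's: exhibit each surface as an explicit finite quotient of a toric surface and verify the required properties. Your constructions for $S(4A_2)$ (Heisenberg $(\Z/3)^2$ on $\P^2$) and $S(A_1+2A_3)$ ($\Z/4$ on $\P^1\times\P^1$ by the $90^\circ$ rotation) are exactly the ones the paper uses; the paper also takes $dP_6/(\Z/6)$ for $S(A_1+A_2+A_5)$ and $(\P^1\times\P^1)/(\Z/2\times\Z/4)$ for $S(2A_1+2A_3)$, consistent with your outline.

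The one place you diverge is $S(3A_1+D_4)$. The paper does \emph{not} go through $S(2A_1+A_3)/(\Z/2)$; it simply invokes Example~\ref{quaternion-example}, namely $\P^2/Q_8$ with the standard $\SL_2$-representation of the quaternion group extended to $\PGL_3$. Your claim that ``the only way to produce [a $D_4$] with a small quasi-\'etale quotient is to start from a toric surface with an $A_3$ point'' is not right: $Q_8\subset\SL_2$ acting at the smooth point $[0{:}0{:}1]\in\P^2$ produces the $D_4$ directly, and this is the cleaner construction. Your route still works --- the $\Z/2$ you want is precisely the image of $j\in Q_8$ on $\P^2/\langle i\rangle\cong S(2A_1+A_3)$, so the two constructions are the same quotient factored differently --- but the twisted-reflection analysis you flag as the main obstacle is an artifact of this factorization and disappears entirely if you act by $Q_8$ on $\P^2$ from the start. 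The paper's verification is correspondingly terse (``it is routine to check that the quotients have the stated singularity type''), whereas your more systematic checklist (degree, Picard rank, stabilizer-by-stabilizer singularity type) is a perfectly good way to carry out that routine.
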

\begin{proof}
We describe each of these surfaces as a quotient of a toric variety, and
find an int-amplified endomorphism of the toric variety that descends to
the quotient (we can also apply \cite[Lemma 2.6]{NakayamaStruct} to show
that the quotients admit int-amplified endomorphisms).
\begin{itemize}
\item $S(3A_1+D_4)$ is the surface constructed in Example
\ref{quaternion-example}.
   
\item $S(A_1+2A_3)$  is the quotient of $\P^1 \times \P^1$ by $\Z/4$,
where the action is generated by $([x:y],[u:v]) \mapsto ([u:v],[x:-y])$.
The morphism raising each coordinate in each $\P^1$ to the fifth power
descends to the quotient. 

\item $S(A_1+A_2+A_5)$ can be described as the quotient of the action
$\Z/6$ on the sextic del Pezzo surface. Let $X$ be the blowup of $\P^2$
at the three toric points $[1:0:0]$, $[0:1:0]$ and $[0:0:1]$. Then the
Cremona automorphism and the order $3$ automorphism given in coordinates
by $[x:y:z] \mapsto [y:z:x]$ (lifted from $\P^2$) generate a cyclic
group of automorphisms of $X$. The quotient of $X$ by this automorphism
is $S(A_1+A_2+A_5)$, and all $m$-th power endomorphisms of $X$ commute
with the automorphisms and descend to the quotient.

\item $S(4A_2)$ is $\P^2$ mod the action of $\Z/3 \times \Z/3$ where the
two generators act on $\P^2$ by $[x:y:z] \mapsto [x:\omega y: \omega^2
z]$ and $[x:y:z] \mapsto [y:z:x]$, respectively. The seventh power
Frobenius morphism descends to the quotient.

\item $S(2A_1+2A_3)$ is $\P^1 \times \P^1$ modulo the action of $\Z/2
\times \Z/4$, where the generators act by $([x,y],[u:v]) \mapsto
([x:-y],[u:-v])$ and $([x:y],[u:v]) \mapsto ([u:v],[y:-x])$. The fifth
power morphism descends to the quotient.
\end{itemize}
In all of these examples the group action preserves the open torus. It
is routine to check that the quotients have the stated singularity type. 
\end{proof}


\section{Lifting endomorphisms}

\begin{definition}
	Let $Y$ be a normal variety such that the fundamental group of $Y^0
	= Y -\Sing Y$ is finite. Define $X$ to be the normalization of $Y$
	in the function field of the universal cover of $Y^0$. Then there is
	finite morphism $\pi: X \to Y$ which is étale over $Y^0$. We call
	$X$ the \textit{quasi-universal cover} of $Y$.

\end{definition}

The fundamental group of the smooth locus of a complex Fano variety is
finite \cite{Braun}, so it has a quasi-universal cover. Miyanishi and
Zhang \cite[Lemma 4]{Miyanishi-Zhang} prove that the quasi-universal
cover of a Gorenstein del Pezzo surface is also a Gorenstein del Pezzo
surface.

\begin{proposition}\label{lifting_lemma} Let $Y$ be a normal surface and
$X$ its quasi-universal cover. Then any int-amplified endomorphism
$f\colon  Y \to Y$ lifts to an int-amplified endomorphism of $X$.
\end{proposition}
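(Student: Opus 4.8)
The plan is to lift $f$ along the finite étale cover $\pi^0 \colon X^0 \to Y^0$ between smooth loci. Since $f$ is finite surjective, $f$ restricts to a finite surjective self-map of $Y^0$ off the (finite) ramification and branch divisors; but in codimension one $Y$ is smooth, so more precisely $f^{-1}(Y^0)$ contains $Y^0$ minus finitely many points, and after removing these finitely many points we still have a connected open set with the same fundamental group (removing a finite set of points from a smooth surface does not change $\pi_1$). So we may form the composite $Y^0 \xrightarrow{f} Y^0 \xleftarrow{\pi^0} X^0$ and pull back the covering: let $Z^0 \to Y^0$ be the fiber product $X^0 \times_{Y^0,f} Y^0$, a finite étale cover of $Y^0$ via the second projection. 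Because $X^0 \to Y^0$ is the universal cover (in the sense of trivializing $\pi_1(Y^0)$, which is finite by hypothesis), and $Z^0 \to Y^0$ is a connected étale cover of the same degree — connectedness follows since $f_* \pi_1(Y^0)$ has finite index equal to the degree of $f$ restricted appropriately, and in fact $f$ induces an injection $\pi_1(Y^0) \to \pi_1(Y^0)$ with image of index $\deg f$ on the smooth locus, wait — I should be careful here: the cleanest route is to observe $X^0 \times_{Y^0} X^0 \to X^0$ is a trivial cover (pullback of the universal cover along a map from a simply connected... no, $X^0$ need not be simply connected in the orbifold sense, but it is genuinely simply connected as a scheme). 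So instead I pull back $\pi^0$ along $f \circ \pi^0 \colon X^0 \to Y^0$; since $X^0$ is simply connected this pullback is a trivial cover, hence admits a section, which gives a map $\tilde f \colon X^0 \to X^0$ lifting $f \circ \pi^0$, i.e. $\pi^0 \circ \tilde f = f \circ \pi^0$ on $X^0$ minus finitely many points.

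Next I extend $\tilde f$ to all of $X$. The map $\tilde f$ is defined on $X^0$ minus a finite set of points (the preimages of the locus where $f$ is not defined on $Y^0$ or meets $\Sing Y$), and it is a morphism to the normal variety $X$; by normality and the fact that the indeterminacy locus has codimension $\geq 2$, $\tilde f$ extends to a morphism $X \dashrightarrow X$ defined outside a finite set. To see it is a genuine morphism everywhere: the map $X \to X \times Y$, $x \mapsto (\tilde f(x), \pi(x))$, is defined in codimension one and its closure is a subvariety finite over $X$ (via the second projection, since $\pi$ is finite) and birational to $X$; normality of $X$ forces it to be an isomorphism onto its image, giving the morphism $\tilde f \colon X \to X$ with $\pi \circ \tilde f = f \circ \pi$ on all of $X$. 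This $\tilde f$ is finite (it is finite in codimension one and proper, hence finite by normality of the target, or: from $\pi \circ \tilde f = f \circ \pi$ with $\pi, f$ finite, $\tilde f$ is quasi-finite and proper) and surjective.

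Finally I check $\tilde f$ is int-amplified. The commuting square $\pi \circ \tilde f = f \circ \pi$ gives, after pulling back and using $\pi_* \pi^* = (\deg \pi) \cdot \mathrm{id}$ on numerical classes, that $\pi^* \colon N^1(Y)_{\C} \to N^1(X)_{\C}$ is injective and intertwines $f^*$ with $\tilde f^*$ on the subspace $\pi^* N^1(Y)_{\C}$. That alone does not control $\tilde f^*$ on a complement, so instead I argue directly: let $A$ be an ample Cartier divisor on $Y$ with $f^*A - A$ ample; then $\pi^*A$ is ample on $X$ and $\tilde f^*(\pi^* A) - \pi^* A = \pi^*(f^*A - A)$ is ample (pullback of ample under a finite morphism is ample), so $\tilde f$ is int-amplified by definition. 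The main obstacle is the careful bookkeeping in the lifting step — ensuring the finitely many bad points removed from $Y^0$ do not change $\pi_1$ and that the resulting rational map extends over codimension-two loci — but this is handled uniformly by normality of $X$ and the fact that for surfaces the smooth locus has complement of codimension two, together with purity of the branch locus.
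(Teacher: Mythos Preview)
Your argument is correct and follows the same three-step outline as the paper: lift $f$ over the simply-connected open set $V=(f\circ\pi)^{-1}(Y^0)\subset X$, extend the resulting rational map $\tilde f\colon X\dashrightarrow X$ to a morphism, and verify int-amplifiedness by pulling back an ample witness along the finite map $\pi$. Steps one and three match the paper verbatim (the paper also notes that $V$ is $\pi^{-1}(Y^0)$ minus finitely many points, hence still simply connected).

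The one substantive difference is in the extension step. The paper argues by contradiction: resolve the indeterminacy of $\tilde f$ by a sequence of blow-ups $p\colon B\to X$, observe that any exceptional curve $C\subset B$ must map to a curve in $X$ (else $\tilde f$ would already be defined at $p(C)$), and then derive a contradiction from the commuting square $\pi\circ\tilde f = f\circ\pi$, which forces $\pi(\tilde f(C))$ to be simultaneously a curve and the image of the point $p(C)$. You instead use a Zariski-main-theorem style argument: since $\pi\circ\tilde f = f\circ\pi$ holds as rational maps and the right-hand side is an honest morphism, the graph closure $\Gamma\subset X\times X$ of $\tilde f$ maps finitely (via $\mathrm{id}\times\pi$) onto the graph of $f\circ\pi$, which is isomorphic to $X$; hence $\Gamma\to X$ is finite and birational, so an isomorphism by normality. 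This is a clean and standard alternative; it avoids the blow-up bookkeeping and works in any dimension. Your write-up of this step is a bit garbled, though --- the phrase ``finite over $X$ via the second projection, since $\pi$ is finite'' does not parse (the second projection of $X\times Y$ lands in $Y$), and the map you want is into $X\times X$, not $X\times Y$. The intended argument is the one I just sketched; you should rewrite that paragraph accordingly.
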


\begin{proof}
Let $\phi: X \to Y$ be the covering map and set $V = (f \circ
\phi)^{-1}(Y^0) \subset X$. Since $f$ has finite fibers, $V$ is
$\phi^{-1}(Y^0)$ minus a finite set of points. Since $\phi^{-1}(Y^0)$ is
smooth and simply-connected, $V$ is also simply-connected. Thus there
exists a lift
\begin{center}
\begin{tikzcd}
 & & X \arrow[d, "\phi"] \\
V \arrow[urr, bend left=12] \arrow[r, "\phi|_V"] & Y \arrow[r, "f"] & Y
\end{tikzcd}
\end{center}
We can think of this as a rational map $\overline{f}: X \dashrightarrow
X$ which lifts $f$. I claim $\overline{f}$ is defined on all of $X$.
Assume for the sake of contradiction that the indeterminacy locus $S$ of
$f$ is a nonempty set of points. Then we can blow up $X$ finitely many
times to obtain a variety $B$, with $p: B \to X$ the blowup morphism,
such that $\overline{f}$ extends to a morphism $\widetilde{f}: B \to X$. If
$C$ is an exceptional curve in the fiber of some $x \in S$, then
$\widetilde{f}(C)$ must be a curve (otherwise $\overline{f}$ would be
determined at $x$). But since $\overline{f}$ is a lift of $f$, 
\begin{center}
\begin{tikzcd}
B \arrow[r, "\widetilde{f}"] \arrow[d, "\phi \circ p"] & X \arrow[d, "\phi"] \\
Y \arrow[r, "f"] & Y 
\end{tikzcd}
\end{center}
commutes. The $\phi$ does not contract any curves, so $\phi(\widetilde{f}
(C))$ is a curve, but this is impossible since $p(C) = x$ which is a
point. 

Thus $\overline{f}: X \to X$ is a morphism which lifts $f$. To show
$\overline{f}$ is int amplified, let $D$ be an ample divisor on $Y$ such
that $f^*D - D$ is also ample (such a divisor $D$ exists since $f$ is
int-amplified). Since $\phi$ is a finite morphism, $\phi^*D$ and
$\phi^*(f^*D - D) = \phi^*f^*D - \phi^*D = \overline{f}^*\phi^*D -
\phi^*D$ are ample. Thus $\overline{f}$ is int-amplified.

\end{proof}

We can use the lifting lemma to show some rank $1$ Gorenstein del Pezzo
surfaces do not admit int-amplified endomorphisms.


\begin{proposition}
The surfaces $S(2A_4)$, $S(D_8)$, $S(A_1+A_7)$, $S(A_8)$ $S(A_2+E_6)$,
$S'(A_2+E_6)$, $S(A_1+E_7)$, and $S'(A_1+E_7)$ do not admit
int-amplified endomorphisms. 
\end{proposition}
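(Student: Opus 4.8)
The plan is to combine the contrapositive of Kawakami--Totaro's theorem \cite[Theorem~C]{Kawakami-Totaro} (an int-amplified endomorphism of degree invertible in $k$ forces Bott vanishing for Weil divisors, hence also for Cartier divisors) with the lifting lemma (Proposition~\ref{lifting_lemma}) and explicit descriptions of the quasi-universal covers of these surfaces, obtained from the local fundamental groups of the $A_n$, $D_n$, $E_n$ singularities. All eight surfaces are Gorenstein del Pezzo surfaces of degree $1$. Two of them are immediate: $S(A_2+E_6)$ and $S(A_1+E_7)$ fail Bott vanishing for Cartier divisors by the corollary to Theorem~\ref{BakerThm}, hence admit no int-amplified endomorphism.

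For $S(A_8)$, $S(D_8)$ and $S(A_1+A_7)$ I would pass to the quasi-universal cover $Z$, a Gorenstein del Pezzo surface by \cite[Lemma~4]{Miyanishi-Zhang} of degree $\abs{\pi_1(X^0)}\cdot\deg X=\abs{\pi_1(X^0)}$. Writing $\widetilde X$ for the minimal resolution, $\Pic(\widetilde X)=\Z K_{\widetilde X}\oplus K_{\widetilde X}^\perp$ with $K_{\widetilde X}^\perp\cong E_8$, so $\Cl(X)=\Z K_{\widetilde X}\oplus(E_8/\Lambda)$ where $\Lambda\subset E_8$ is the root lattice spanned by the $(-2)$-curves; since $\Cl(X)_{\mathrm{tors}}\cong\pi_1(X^0)^{\mathrm{ab}}$ and $\abs{\pi_1(X^0)}\le 9$, this pins down $\pi_1(X^0)$ as $\Z/3$, $\Z/2$, $\Z/4$ respectively (the one non-abelian candidate $S_3$ for $S(D_8)$ being excluded by a further lattice argument on $Z$). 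A point of $Z$ over an ADE singularity $p$ of $X$ acquires the quotient singularity $\C^2/N$ with $N=\ker(\pi_1^{\mathrm{loc}}(p)\to\pi_1(X^0))$, a surjection when $p$ is the unique singular point of $X$ (as $\pi_1(X)=1$); computing these kernels and discarding the configurations that no del Pezzo surface of the relevant degree can carry (e.g.\ $A_{11}$ in degree $2$, $5A_1$ in degree $4$), one finds that $Z$ has degree $3$, $2$, $4$ and a single $A_2$, $D_5$, $A_1$ singularity respectively. Each of these surfaces appears in Baker's table in the ``fails Cartier Bott vanishing'' column, so none carries an int-amplified endomorphism; by Proposition~\ref{lifting_lemma}, neither do $S(A_8)$, $S(D_8)$, $S(A_1+A_7)$.

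The remaining three, $S(2A_4)$, $S'(A_2+E_6)$ and $S'(A_1+E_7)$, satisfy Bott vanishing for Cartier divisors, and so (by Baker's tables) do their quasi-universal covers --- respectively the smooth del Pezzo surface of degree $5$, a cubic surface with one $D_4$ point, and a degree $2$ del Pezzo with one $E_6$ point --- so lifting in its Cartier form is not enough. Here I would instead show Bott vanishing fails for \emph{Weil} divisors. Since these surfaces have Picard rank one, their ample Weil divisors are, modulo the finite torsion of $\Cl(X)$, the positive multiples of one generator, so one need only compute $\chi(X,\Omega^{[1]}_X(D))$ for finitely many classes $D$ via the Riemann--Roch formula for normal surfaces, with its correction terms at the $A_n$, $D_n$, $E_n$ singularities (automated in \cite{CodeRepo}), and produce a $D$ with $\chi<0$; then $h^1(X,\Omega^{[1]}_X(D))>0$, violating Bott vanishing, and \cite[Theorem~C]{Kawakami-Totaro} again applies (to $X$, or to its cover together with Proposition~\ref{lifting_lemma}).

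The main obstacle is of two kinds. In the second step it is the bookkeeping --- correctly computing $\pi_1(X^0)$ and then reading off exactly which ADE singularity the cover acquires from the induced map on local fundamental groups, discarding the lattice-theoretically impossible possibilities. In the third step it is the normal-surface Riemann--Roch computation itself: setting up the singularity corrections correctly and verifying that $\chi(X,\Omega^{[1]}_X(D))$ is genuinely negative for some ample Weil divisor $D$ --- precisely the numerical check that the accompanying code \cite{CodeRepo} is designed to perform.
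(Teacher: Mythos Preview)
Your treatment of $S(A_2+E_6)$, $S(A_1+E_7)$ (direct Cartier failure) and of $S(A_8)$, $S(D_8)$, $S(A_1+A_7)$ (lift, then Cartier failure on the cover) is correct and matches the paper, although the paper simply cites \cite{Miyanishi-Zhang} for the covers rather than recomputing $\pi_1(X^0)$ from the lattice, and concludes via the inequality $10-n-2\rho<0$ rather than Baker's tables.

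The genuine gap is in your third step. Your plan for $S(2A_4)$, $S'(A_2+E_6)$, $S'(A_1+E_7)$ is to produce a Weil divisor with $\chi(\Omega^{[1]}(D))<0$, either on $X$ or on its cover. For $S(2A_4)$ this cannot succeed: the quasi-universal cover is the \emph{smooth} quintic del Pezzo, so Weil $=$ Cartier there, and by Baker it satisfies Bott vanishing. Since Bott vanishing descends along a quasi-\'etale cover in characteristic zero (the trace splits $\mathcal{O}_X$ off $\pi_*\mathcal{O}_Z$, hence $\Omega^{[j]}_X(D)$ off $\pi_*\Omega^{[j]}_Z(\pi^*D)$), $S(2A_4)$ itself also satisfies Bott vanishing for all Weil divisors. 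So no Riemann--Roch search will ever find the negative $\chi$ you want. The paper handles $S(2A_4)$ by a completely different route: it lifts and then invokes \cite[Proposition~6.4]{Totaro-Log-Bott}, which rules out int-amplified endomorphisms of the smooth quintic del Pezzo directly (not via Bott vanishing).

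For $S'(A_2+E_6)$ and $S'(A_1+E_7)$ your approach is not obviously doomed---the covers (rank~$3$ with a $D_4$ point, rank~$2$ with an $E_6$ point) do carry non-Cartier Weil divisors---but it is speculative: you have not produced the divisor, and the paper does not do this computation either. Instead the paper argues on the cover $Z$: the smooth locus $Z^0$ is simply connected (it contains the simply-connected preimage of $Y^0$ as an open subset with finite complement), and $Z$ has a $D_4$ resp.\ $E_6$ singularity, which is not a cyclic quotient singularity; hence $Z$ is neither toric nor a nontrivial quotient of a toric surface by a group acting freely in codimension one. Since $\rho(Z)>1$, Nakayama's theorem \cite[Theorem~1.3]{Nakayama} then rules out any endomorphism of degree $>1$ on $Z$, and Proposition~\ref{lifting_lemma} finishes. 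You should replace your Riemann--Roch search for these three surfaces by these two external inputs (Totaro for the smooth quintic, Nakayama for the rank~$>1$ covers).
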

\begin{proof}
\cite{Miyanishi-Zhang} describes the singularity types and Picard rank
of the quasi-universal covers of rank $1$ Gorenstein del Pezzo surfaces.
Assume for the sake of contradiction that the surface $S(2A_4)$ admits
an int-amplified endomorphism. Then by \ref{lifting_lemma} it would lift
to an int-amplified endomorphism of the quasi-universal cover of
$S(2A_4)$, which is the smooth quintic del Pezzo surface. By
\cite[Proposition 6.4]{Totaro-Log-Bott} the smooth quintic del Pezzo
surface does not admit an int-amplified endomorphism, yielding a
contradiction: so $S(2A_4)$ cannot admit an int-amplified endomorphism. 

The quasi-universal cover of $S(D_8)$ is a rank $3$ Gorenstein del Pezzo
with singularity type $D_5$. The quasi-universal cover of $S(A_1+A_7)$
is a rank $5$ Gorenstein del Pezzo with singularity type $A_1$. The
quasi-universal cover of $S(A_8)$ is a rank $5$ Gorenstein del Pezzo of
singularity type $A_2$. All these universal covers fail Bott vanishing
for a Cartier divisor by \ref{BakerThm} since for each we have
$10-n-2\rho(X)<0$. Since their universal covers do not admit
int-amplified endomorphisms, $S(D_8)$, $S(A_1+A_7)$ and $S(A_8)$ do not
admit int-amplified endomorphisms. 

The quasi-universal covers of $S(A_2+E_6)$ and $S'(A_2+E_6)$ are
Gorenstein del Pezzo surfaces of rank $3$ with a single $D_4$
singularity. The quasi-universal covers of $S(A_1+E_7)$ and
$S'(A_1+E_7)$ are Gorenstein del Pezzo surfaces of rank $2$ with a
single $E_6$ singularity. Let $Y$ be one of the four surfaces
$S(A_2+E_6)$, $S'(A_2+E_6)$, $S(A_1+E_7)$ or $S'(A_1+E_7)$ and let $\pi:
X \to Y$ be the quasi-universal cover. Since $D_4$ and $E_6$
singularities are not toric, $X$ is not a toric variety. Furthermore, we
know $f^{-1}(Y^0)$ is simply-connected and is $X^0$ minus a finite set
of points: since $X^0$ is smooth, $X^0$ is also simply-connected. Thus
$X$ cannot be a quotient of a toric variety by a finite group acting
freely in codimension one. However, by \cite[Theorem 1.3]{Nakayama}, if
a klt del Pezzo surface of Picard rank $>1$ admits a endomorphism of
degree $>1$, it is a quotient of a toric variety by a finite group
acting freely in codimension one. If $Y$ admitted an int-amplified
endomorphism, it would lift to endomorphism of $X$ of degree $>1$, which
is impossible. Thus the surfaces $S(A_2+E_6)$, $S'(A_2+E_6)$,
$S(A_1+E_7)$ and $S'(A_1+E_7)$ do not admit int-amplified endomorphisms.

\end{proof}

\section{Riemann-Roch for normal surfaces}

The Riemann-Roch formula for normal surfaces generalizes the classical
Hirzebruch-Riemann-Roch formula for vector bundles on a smooth
projective surface and computes the Euler characteristic of a reflexive
sheaf on a surface in terms of intersection theory and local invariants
of singularities. Let us give an overview of this theory, which is
developed in works of Blache \cite{Blache} and Langer
\cite{Langer-Chern}, \cite{Langer}.

Let $X$ be a normal projective variety. A coherent sheaf $\mathcal{E}$
on $X$ is said to be \textit{reflexive} if the natural map $\mathcal{E}
\to \mathcal{E}^{**}$ is an isomorphism. The double-dual of a coherent
sheaf is a reflexive sheaf called its \textit{reflexive hull}. If
$f\colon  X \to Y$ is a morphism and $\mathcal{E}$ a reflexive sheaf on
$X$, we denote $f_{[*]}\mathcal{E} \colonequals (f_*\mathcal{E})^{**}$.
We also denote $\Omega^{[p]}_X \colonequals (\Omega^p_X)^{**}$, the
sheaf of reflexive $p$-forms on $X$, and $\Omega^{[p]}_X(D) \colonequals
(\Omega^p_X \otimes \mathcal{O}(D))^{**}$.

Given a Weil divisor $D$ on $X$, $\mathcal{O}(D)$ is a reflexive sheaf
of rank one on $X$. In the other direction, the vanishing locus of a
nonzero rational section of a reflexive sheaf of rank one is a Weil
divisor. Thus $D \mapsto \mathcal{O}(D)$, $\mathcal{E} \mapsto
c_1(\mathcal{E})$ define a bijection between isomorphism classes of
reflexive rank one sheaves and Weil divisor classes. Define the first
Chern class of a reflexive sheaf $\mathcal{E}$ of rank $r$ by
$c_1(\mathcal{E}) \colonequals c_1( (\wedge^r \mathcal{E})^{**})$.

Now let us require $X$ to be a surface. Let $\pi: \widetilde{X} \to X$ be a
resolution of singularities. Then given any divisor $D$ on $X$, it has a
numerical lift $\pi^*D$ to $\widetilde{X}$, where we can write $\pi^*D =
\overline{D} + D^s$ where $\overline{D}$ is the strict transform of $D$
and the support of $D^s$ is contained in the exceptional locus of $\pi$.
Let $x$ be a singular point of $X$. Given a vector bundle $\mathcal{F}$
on $\widetilde{X}$, there is a local Chern class $c_1(x, \mathcal{F}) \in
\text{CH}^1(\widetilde{X}, \Q)$, supported in $\pi^{-1}(x)$, which satisfies
the formula \[ c_1(\mathcal{F}) = \pi^*c_1(\pi_{[*]}\mathcal{F}) +
\sum_{x \in \text{Sing}X} c_1(x, \mathcal{F}).\]


We use $c_1(x, D)$ as a shorthand for $c_1(x, \mathcal{O}(D))$. If $D =
\pi_{*}\widetilde{D}$, we have
\[c_1(\mathcal{O}(\widetilde{D})) = \pi^*c_1(\mathcal{O}(D)) + \sum_{x \in\text{Sing}X} c_1(x, \widetilde{D}).\]

There is also a local second Chern class $c_2(x, \mathcal{F}) \in
\text{CH}^2(\widetilde{X}, \Q) \isom \Q$, which satisfies many standard
formulas for Chern classes, for example \[ c_2(x, \mathcal{F} \otimes
\mathcal{L}) = c_2(x, \mathcal{F}) + (r-1)c_1(x, \mathcal{F})\cdot
c_1(x, \mathcal{L}) + \frac{r(r-1)}{2}c_1(x, \mathcal{L})^2\] for
$\mathcal{L}$ a line bundle \cite[Proposition 2.4(5)]{Langer-Chern}. We
use the local second Chern class to define the (global) second Chern
class as follows. Suppose $\mathcal{E}$ is a reflexive sheaf on $X$.
Letting $\mathcal{F}$ be any vector bundle on $\widetilde{X}$ such that
$f_{[*]}(\mathcal{F}) = \mathcal{E}$, we set \[ c_2(\mathcal{E})
\colonequals \pi_*c_2(\mathcal{F}) - \sum_{x \in \text{Sing}X}c_2(x,
\mathcal{F})[x].\] 

Finally, there is a local invariant $a(x, \mathcal{E}) \in \R$ which
serves as the correction term for the Riemann-Roch formula. Let us
assume that the singularity at $x$ is a quotient singularity. Then we
have 
\[ a(x, \mathcal{O}(D)) = \frac{1}{2}\{ c_1(x, D)\}\cdot(\lfloor \pi^* D \rfloor - K_{\widetilde{X}})\]
where $\{ \ \}$ is the fractional part \cite[Theorem 1.2(b)]{Blache}.
There are other formulas we can use when $X$ is an ADE singularity
\cite[\textsection 6]{Langer-Chern}. 




We also have \cite[Proposition 5.8]{Langer-Chern}, \[ a(x,
(\Omega^{[1]}\otimes \mathcal{E})^{**}) = a(x, \mathcal{E}) + a(x,
(\mathcal{E} \otimes \omega_X)^{**}) + \frac{\rk \mathcal{E}}{|G|} - k\]
where $k$ is the number of $\mathcal{O}_X$s in a direct sum
decomposition of $\mathcal{E}$ into indecomposable sheaves and $G$ is
the local fundamental group of the singularity at $x$, i.e. locally $(X,
x)$ is isomorphic to $\C^2/G$ with $G$ acting freely in codimension $1$
(for example if there is an $A_n$ singularity at $x$, $G \isom
\Z/(n+1)\Z$).

\begin{theorem}\cite[Theorem 4.4]{Langer} Let $X$ be a normal projective
surface. Then for a reflexive sheaf $\mathcal{E}$ on $X$ we have
\[ \chi(X, \mathcal{E}) = (\rk \mathcal{E}) \chi(X, \mathcal{O}_X) + \frac{1}{2}c_1(\mathcal{E})\cdot (c_1(\mathcal{E})-K_X) - c_2(\mathcal{E}) + \sum_{x \in \Sing X} a(x, \mathcal{E}). \]
\end{theorem}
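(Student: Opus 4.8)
The plan is to deduce the formula from the classical Hirzebruch--Riemann--Roch theorem on a resolution, reading off the local terms $a(x,\mathcal{E})$ from the discrepancy between the two sides. Fix a resolution $\pi\colon\widetilde{X}\to X$ and choose a vector bundle $\mathcal{F}$ on $\widetilde{X}$ with $\pi_{[*]}\mathcal{F}=\mathcal{E}$; such an $\mathcal{F}$ exists because $\pi^{*}\mathcal{E}$ modulo torsion can be made locally free after finitely many further blowups, and the reflexive pushforward of that bundle agrees with $\mathcal{E}$ away from the (finite, hence codimension-two) exceptional set, so it is isomorphic to $\mathcal{E}$. Applying Hirzebruch--Riemann--Roch to $\mathcal{F}$ on the smooth surface $\widetilde{X}$ gives
\[
\chi(\widetilde{X},\mathcal{F}) = (\rk\mathcal{F})\,\chi(\widetilde{X},\mathcal{O}_{\widetilde{X}}) + \tfrac12 c_1(\mathcal{F})\cdot\big(c_1(\mathcal{F})-K_{\widetilde{X}}\big) - c_2(\mathcal{F}),
\]
and the task becomes to rewrite each term on $\widetilde{X}$ in terms of its analogue on $X$ plus a sum of contributions localized over $\Sing X$.

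For the intersection-theoretic terms, write $c_1(\mathcal{F})=\pi^{*}c_1(\mathcal{E})+\sum_x c_1(x,\mathcal{F})$ as in the formula recalled above, each $c_1(x,\mathcal{F})$ being a $\Q$-combination of exceptional curves over $x$; the projection formula kills every mixed term, so $c_1(\mathcal{F})^{2}=c_1(\mathcal{E})^{2}+\sum_x c_1(x,\mathcal{F})^{2}$, and writing $K_{\widetilde{X}}=\pi^{*}K_X+\Delta$ with $\Delta$ supported on the exceptional locus gives $c_1(\mathcal{F})\cdot K_{\widetilde{X}}=c_1(\mathcal{E})\cdot K_X+\sum_x c_1(x,\mathcal{F})\cdot K_{\widetilde{X}}$. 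The definition $c_2(\mathcal{E})\colonequals\pi_{*}c_2(\mathcal{F})-\sum_x c_2(x,\mathcal{F})[x]$ gives, after taking degrees, $c_2(\mathcal{F})=c_2(\mathcal{E})+\sum_x c_2(x,\mathcal{F})$. For the cohomological terms I would use the Leray spectral sequence for $\pi$, whose higher direct images vanish in degrees $\ge 2$ since the fibers have dimension $\le 1$: this yields $\chi(\widetilde{X},\mathcal{O}_{\widetilde{X}})=\chi(X,\mathcal{O}_X)-\sum_x\length_x\big(R^{1}\pi_{*}\mathcal{O}_{\widetilde{X}}\big)$ (using normality, $\pi_{*}\mathcal{O}_{\widetilde{X}}=\mathcal{O}_X$), and, combining the Leray sequence for $\mathcal{F}$ with the inclusion $\pi_{*}\mathcal{F}\hookrightarrow\mathcal{E}$ (whose cokernel has finite length supported on $\Sing X$), $\chi(\widetilde{X},\mathcal{F})=\chi(X,\mathcal{E})-\sum_x\big(\length_x(R^{1}\pi_{*}\mathcal{F})+\length_x(\mathcal{E}/\pi_{*}\mathcal{F})\big)$.

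Substituting all of these identities into Hirzebruch--Riemann--Roch and separating off the $X$-side terms, every leftover term is a sum over $x\in\Sing X$; one then \emph{defines}
\begin{multline*}
a(x,\mathcal{E})\colonequals\tfrac12 c_1(x,\mathcal{F})\cdot\big(c_1(x,\mathcal{F})-K_{\widetilde{X}}\big)-c_2(x,\mathcal{F})\\
-(\rk\mathcal{E})\length_x\big(R^{1}\pi_{*}\mathcal{O}_{\widetilde{X}}\big)+\length_x(R^{1}\pi_{*}\mathcal{F})+\length_x(\mathcal{E}/\pi_{*}\mathcal{F}),
\end{multline*}
so that the asserted formula holds tautologically (taking $\mathcal{E}=\mathcal{O}_X$, $\mathcal{F}=\mathcal{O}_{\widetilde{X}}$ gives $a(x,\mathcal{O}_X)=0$, a useful sanity check).

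The real content, and what I expect to be the main obstacle, is to prove that $a(x,\mathcal{E})$ depends only on the local (analytic, or formal, or étale) germ of $(X,x)$ together with the germ of $\mathcal{E}$ there --- in particular that it is independent of the resolution $\pi$ and of the auxiliary bundle $\mathcal{F}$. Since any two resolutions are dominated by a common one, it suffices to compare $\pi$ with $\pi$ followed by a single blowup at a smooth point of $\widetilde{X}$, and to check that each ingredient changes by a quantity localized at that point with the various changes cancelling; the local Chern classes $c_1(x,-)$ and $c_2(x,-)$ of Langer are constructed precisely to be stable under such blowups, while the changes in the three length terms compensate one another. Independence of $\mathcal{F}$ is handled by the same device of passing to a common resolution and comparing two choices there. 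Finally, to recover the explicit formulas for $a(x,\mathcal{O}(D))$ at quotient and ADE singularities recalled earlier, one carries out the above computation on the explicit minimal resolution, where the combinatorics of the resolution graph enters; I would treat the well-definedness as the substantive point and regard the explicit evaluations as routine but lengthy.
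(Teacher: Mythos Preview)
The paper does not prove this theorem; it is quoted verbatim from Langer \cite[Theorem 4.4]{Langer} and used as a black box, so there is no proof in the paper to compare against. Your outline is in fact the standard argument that Langer and Blache use: apply Hirzebruch--Riemann--Roch to a locally free model $\mathcal{F}$ on a resolution, decompose $c_1(\mathcal{F})$, $c_2(\mathcal{F})$, $\chi(\widetilde{X},\mathcal{F})$ and $\chi(\widetilde{X},\mathcal{O}_{\widetilde{X}})$ into global-plus-local pieces via the projection formula and the Leray spectral sequence, and absorb the residual local terms into the definition of $a(x,\mathcal{E})$. You have also correctly identified where the actual work lies, namely in showing that $a(x,\mathcal{E})$ is independent of the choices of $\pi$ and $\mathcal{F}$ and depends only on the germ of $(X,\mathcal{E})$ at $x$; this is exactly what Langer establishes in \cite{Langer-Chern}, and the paper under review simply takes it for granted.

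One small caution: the existence of a locally free $\mathcal{F}$ with $\pi_{[*]}\mathcal{F}=\mathcal{E}$ is not quite as immediate as you suggest. Taking $\pi^{*}\mathcal{E}$ modulo torsion gives a torsion-free sheaf, but on a surface a torsion-free sheaf need not become locally free after blowups alone; one has to pass to the double dual (which is locally free on a smooth surface) or otherwise argue more carefully, and then check that the reflexive pushforward really recovers $\mathcal{E}$. This is handled in the references, but your one-line justification glosses over it.
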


Here the intersection products are defined by pulling back to a
resolution \`a la Mumford's intersection theory on a normal surface.
This is a generalization of the Hirzebruch-Riemann-Roch theorem for
vector bundles on algebraic surfaces.

\begin{corollary}
Let $X$ be a Gorenstein del Pezzo surface and $D$ a Weil divisor on $X$. Then
\[
\chi(X, \Omega^{[1]}(D)) = 2 + (K_X+2D) \cdot D - c_2(\Omega^{[1]}(D)) + \sum_{x \in \Sing X} a(x, \Omega^{[1]}(D))
\]
\end{corollary}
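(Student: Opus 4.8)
The plan is to specialize Langer's Riemann–Roch theorem (Theorem stated just above) to the reflexive sheaf $\mathcal{E} = \Omega^{[1]}_X(D)$ on a Gorenstein del Pezzo surface $X$, and simplify the first few terms using the Gorenstein condition. Concretely, I would substitute $\rk\mathcal{E} = 2$ and $c_1(\mathcal{E}) = c_1(\Omega^{[1]}_X) + 2D = K_X + 2D$ (the latter because $\Omega^{[1]}_X$ is reflexive of rank $2$, so $\wedge^2\Omega^{[1]}_X$ has first Chern class $K_X$, and twisting by $\mathcal{O}(D)$ adds $2c_1(\mathcal{O}(D)) = 2D$ to $c_1$) into the general formula. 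This turns the $(\rk\mathcal{E})\chi(X,\mathcal{O}_X)$ term into $2\chi(X,\mathcal{O}_X)$, and the quadratic term $\tfrac12 c_1(\mathcal{E})\cdot(c_1(\mathcal{E}) - K_X)$ into $\tfrac12(K_X + 2D)\cdot(K_X + 2D - K_X) = \tfrac12(K_X+2D)\cdot(2D) = (K_X+2D)\cdot D$.

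Next I would observe that for a Gorenstein del Pezzo surface $X$ (which is rational with rational singularities — indeed it has canonical, hence Du Val, singularities, and its minimal resolution is a rational surface), $\chi(X,\mathcal{O}_X) = 1$: this follows because $\pi\colon \widetilde{X}\to X$ is a resolution with $R^i\pi_*\mathcal{O}_{\widetilde{X}} = 0$ for $i>0$ (rational singularities), so $\chi(X,\mathcal{O}_X) = \chi(\widetilde{X},\mathcal{O}_{\widetilde{X}}) = 1$ since $\widetilde{X}$ is a smooth rational surface with $h^1(\mathcal{O}) = h^2(\mathcal{O}) = 0$. Hence $2\chi(X,\mathcal{O}_X) = 2$, which is the constant $2$ appearing in the corollary.

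Assembling the pieces: Langer's formula gives
\[
\chi(X,\Omega^{[1]}(D)) = 2\chi(X,\mathcal{O}_X) + \tfrac12 c_1(\mathcal{E})\cdot(c_1(\mathcal{E})-K_X) - c_2(\mathcal{E}) + \sum_{x\in\Sing X} a(x,\mathcal{E}),
\]
and plugging in the three simplifications above ($2\chi(X,\mathcal{O}_X) = 2$; $\tfrac12 c_1(\mathcal{E})\cdot(c_1(\mathcal{E})-K_X) = (K_X+2D)\cdot D$; $\mathcal{E} = \Omega^{[1]}(D)$) yields exactly the claimed identity. I would write this as a short computation, making explicit the identification $c_1(\Omega^{[1]}_X) = K_X$ for a normal Gorenstein surface (where $\omega_X = \mathcal{O}(K_X)$ is itself reflexive of rank one and $\Omega^{[2]}_X = \omega_X$).

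The only genuine point requiring care — and hence the "main obstacle," though a mild one — is the justification that $c_1(\Omega^{[1]}_X(D)) = K_X + 2D$ as a numerical class on $X$, i.e. that forming the reflexive twist commutes with $c_1$ in the expected way: one must use that $c_1$ of a reflexive sheaf is defined via $c_1((\wedge^r\mathcal{E})^{**})$ and that $(\Omega^{[1]}_X \otimes \mathcal{O}(D))^{**}$ has the same determinant (up to reflexive hull) as $\Omega^{[1]}_X \otimes \mathcal{O}(2D)$, which follows because the determinant of a rank-$2$ bundle tensored by a line bundle $L$ picks up $L^{\otimes 2}$, and everything agrees away from the finitely many singular points, so the reflexive Chern classes agree. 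Everything else is bookkeeping with the general Riemann–Roch statement already granted.
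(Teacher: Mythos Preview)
Your proposal is correct and follows essentially the same approach as the paper: specialize Langer's Riemann--Roch to $\mathcal{E}=\Omega^{[1]}(D)$, plug in $\rk\mathcal{E}=2$, $c_1(\mathcal{E})=K_X+2D$, and $\chi(X,\mathcal{O}_X)=1$. The only cosmetic difference is that the paper justifies $\chi(X,\mathcal{O}_X)=1$ via Hodge symmetry for quotient singularities rather than via rational singularities, but both arguments are valid and equally short.
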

\begin{proof}
Let $\mathcal{E} = \Omega^{[1]}(D)$. We have $\rk \mathcal{E} = 2$.
Also, $\chi(\mathcal{O}_X) = h^{0,0}-h^{0,1}+h^{0,2} =
h^{0,0}-h^{1,0}+h^{2,0}$ by Hodge symmetry for varieties with quotient
singularities; since $X$ is rational it has no global reflexive
differential forms \cite{Kebekus}, so $h^{1,0}$ and $h^{2,0}$ are both
zero; thus $\chi(X, \mathcal{O}_X) = 1$. Finally $c_1(\Omega^{[1]}(D)) =
c_1((\wedge^2 \Omega^{[1]}(D))^{**}) = c_1(\Omega^{[2]}(2D)) = K_X+2D$.
Plugging this into the above theorem gives us the expression.
\end{proof}


Now we will suppose $D = \pi_*\widetilde{D}$. Then $\Omega^{[1]}_X(D) =
\pi_{[*]}\Omega^1_{\widetilde{X}}(\widetilde{D})$

\[ c_2(\Omega^{[1]}_X(\pi_*D)) = c_2(\Omega^1_{\widetilde{X}}(D)) - \sum_{x
\in \Sing X} c_2(x, \Omega^1_{\widetilde{X}}(\widetilde{D})) =
c_2(\Omega^1_{\widetilde{X}}) + K_{\widetilde{X}} \cdot D + D^2- \sum_{x \in
\Sing X} c_2(x, \Omega^1_{\widetilde{X}}(\widetilde{D})). \] By \cite[Theorem
7.3]{Blache},
\[
c_2(\Omega^{[1]}_X) = 1 + \#\text{Components}(E_x) -\frac{1}{|G|}.
\]

Furthermore,
\[ c_2(x, \Omega^1_{\widetilde{X}}(\widetilde{D})) = c_2(x,
\Omega^1_{\widetilde{X}}) + c_1(x, \Omega^1_{\widetilde{X}}) \cdot c_1(x,
\widetilde{D}) + c_1(x, \widetilde{D})^2\] Since
$c_1(x, \Omega^1_{\widetilde{X}}) = K_{\widetilde{X}}$ and $K_{\widetilde{X}} \cdot
E_i$ for any exceptional curve of $\pi$, the middle term vanishes. Also
$c_2(\Omega^1_{\widetilde{X}}) = c_2(T_{\widetilde{X}})$ which is the Euler
characteristic of $\widetilde{X}$. This is equal to $12$ minus the degree of
$\widetilde{X}$.

Accompanying this paper is a GitHub repository \cite{CodeRepo} which
contains functions to compute $\chi(X, \Omega^{[1]}_X(\pi_*\widetilde{D}))$
for any divisor $\widetilde{D} = \sum a_iC_i$, where $C_i$ are the
$(-1)$-curves on $\widetilde{X}$, in terms of the singularities on $X$,
intersection numbers, and the coefficients $a_i$. Here $X$ is assumed to
be a Gorenstein del Pezzo surface but not necessarily Picard rank $1$. 

We also need to be able to determine if $\pi_*\widetilde{D}$ is ample.
Note that if $X$ is rank 1 and a divisor on $X$ is effective and
nonzero, then it is ample (this follows because any nonzero effective
divisor has positive intersection with an ample divisor). This suffices
in many cases, but in general, we can use the following lemmas to
determine whether $\pi_*(\widetilde{D})$ is ample.

\begin{lemma}\label{weak_dp_cone} Let $\widetilde{X}$ be a weak del Pezzo
surface. If $\widetilde{X}$ is not $\P^2$ or a Hirzebruch surface, then the
closed cone of curves is spanned by $(-1)$- and $(-2)$-curves.
\end{lemma}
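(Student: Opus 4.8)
The statement concerns a weak del Pezzo surface $\widetilde{X}$ that is not $\P^2$ or a Hirzebruch surface, and we want to show $\overline{NE}(\widetilde{X})$ is spanned by $(-1)$- and $(-2)$-curves. The plan is to combine the cone theorem (applied to $-K_{\widetilde{X}}$, which is nef and big) with a direct classification of which irreducible curves can be extremal.

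First I would recall that since $-K_{\widetilde{X}}$ is nef and big, the Mori cone $\overline{NE}(\widetilde{X})$ is a rational polyhedral cone by the cone theorem: outside the ray-free half-space $\{z : K_{\widetilde{X}} \cdot z \geq 0\}$ the extremal rays are generated by rational curves $C$ with $0 < -K_{\widetilde{X}} \cdot C \leq 3$, and on the hyperplane $\{K_{\widetilde{X}} \cdot z = 0\}$ the cone is a finite union of extremal rays as well (here using $-K_{\widetilde{X}}$ big, so the ``$K$-trivial part'' of the boundary is itself a finite polyhedral cone, spanned by finitely many classes of curves $C$ with $-K_{\widetilde{X}} \cdot C = 0$). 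So it suffices to show that every extremal rational curve class $C$ with $-K_{\widetilde{X}} \cdot C \in \{0, 1, 2, 3\}$ is represented by a $(-1)$- or $(-2)$-curve.

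Next I would analyze the possible values of $-K_{\widetilde{X}} \cdot C$ for an extremal curve $C$. If $-K_{\widetilde{X}} \cdot C = 0$: then by adjunction $C^2 = 2p_a(C) - 2 - K_{\widetilde{X}}\cdot C = 2p_a(C)-2$; since $C$ lies on a surface with $-K_{\widetilde{X}}$ nef we have $C^2 \geq -2$ (as noted in the preliminaries), and an extremal — hence rigid — curve forces $C^2 < 0$, so $C^2 = -2$ and $p_a(C) = 0$, i.e.\ $C$ is a $(-2)$-curve. If $-K_{\widetilde{X}} \cdot C = 1$: adjunction gives $C^2 = 2p_a(C) - 3$, which is odd; extremality forces $C^2 < 0$; and $C^2 \geq -2$ forces $C^2 = -1$, $p_a(C) = 0$, a $(-1)$-curve. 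The cases $-K_{\widetilde{X}}\cdot C = 2$ and $3$ are where the hypothesis ``not $\P^2$ or a Hirzebruch surface'' enters: $-K_{\widetilde{X}}\cdot C = 3$ would give $C^2 = 2p_a(C) + 1 \geq 1$, so $C$ spans an extremal ray with $C^2 > 0$, forcing $\rho(\widetilde{X}) = 1$ and $\widetilde{X} \cong \P^2$; and $-K_{\widetilde{X}} \cdot C = 2$ with extremality gives $C^2 = 2p_a(C) \in \{0\}$ (negativity of extremal rays of square $\le -1$ is what we want, but square $0$ survives), and a nef class $C$ with $C^2 = 0$ generating an extremal ray gives a morphism $\widetilde{X} \to \P^1$ with $C$ a fiber; combined with $\rho(\widetilde{X}) \le 2$ (a curve with $C^2 = 0$ on an extremal ray, together with big $-K$, pins down the Picard number) this forces $\widetilde{X}$ to be a Hirzebruch surface $\F_0$ or $\F_2$. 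I would cite the standard structure theory of weak del Pezzo surfaces (as already invoked in the preliminaries of the paper) to make this last step clean.

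The main obstacle will be handling the $-K_{\widetilde{X}} \cdot C = 2$, $C^2 = 0$ case carefully: one must argue that the only way an extremal ray can fail to have negative self-intersection is when $\widetilde{X}$ fibers over $\P^1$, and then identify the total space as $\F_0$ or $\F_2$ using that $\widetilde{X}$ is a weak del Pezzo of Picard rank $2$ (blowups of $\P^2$ at $\geq 1$ point always contain a $(-1)$-curve, so the conic-bundle case with no $(-1)$-curve forces a Hirzebruch surface, and weak del Pezzo restricts which one). All the other cases are immediate from adjunction plus the two inequalities $C^2 \geq -2$ and (for extremal rigid $C$) $C^2 < 0$. Everything else is the cone theorem, which I would apply as a black box.
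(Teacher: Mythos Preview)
Your approach is essentially the paper's: apply the cone theorem (using that $-K_{\widetilde X}$ is nef and big, so $\widetilde X$ is log Fano and $\overline{NE}(\widetilde X)$ is rational polyhedral) and then classify extremal curves. The paper compresses your case analysis for $K_{\widetilde X}\cdot C<0$ into a single citation of \cite[Theorem 1.28]{KollarMori}, which directly says a $K$-negative extremal contraction on a surface not equal to $\P^2$ or a $\P^1$-bundle contracts a $(-1)$-curve; your unpacking into the values $-K_{\widetilde X}\cdot C\in\{1,2,3\}$ is a fine substitute.

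One point where the paper is tighter: in the $K_{\widetilde X}\cdot C=0$ case you write ``extremal --- hence rigid --- forces $C^2<0$'', but extremality alone does not preclude $C^2=0$ (as you yourself note in the $-K_{\widetilde X}\cdot C=2$ case). The paper instead invokes the Hodge index theorem: since $(-K_{\widetilde X})^2>0$, the intersection form on $(-K_{\widetilde X})^{\perp}$ is negative definite, so any nonzero class $C$ with $K_{\widetilde X}\cdot C=0$ automatically has $C^2<0$. That closes the gap cleanly and avoids any appeal to rigidity.
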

\begin{proof}
Since $X$ is weak Fano it is log Fano, so by the Cone Theorem, the cone
of curves of $X$ is spanned by finitely many rational curves. Let $C$ be
an extremal curve. Since $-K_X$ is nef, we have either $K_X\cdot C < 0$
or $K_X\cdot C = 0$. In the case $K_X \cdot C < 0$, by \cite[Theorem
1.28]{KollarMori}, $C$ is a $(-1)$-curve (since we have assumed $X$ is
not isomorphic to $\P^2$ or a $\P^1$-bundle over $\P^1$). In the other
case, we have $K_X \cdot C = 0$. Since $(-K_X)^2 > 0$, the intersection
pairing on $(-K_X)^\perp$ is negative definite, so $C^2<0$. Thus
$(K_X+C)\cdot C = C^2 < 0$, so the arithmetic genus of $C$ must be zero.
Thus $C$ is a smooth rational curve with $C^2 = -2$ as desired.
\end{proof}

\begin{lemma}\label{ample_criterion} Let $X$ be a Gorenstein del Pezzo
surface whose minimal resolution is not $\P^2$ or a Hirzebruch surface
and let $\pi: \widetilde{X} \to X$ be its minimal resolution. Let $C_1,
\dots C_n$ be the $(-1)$-curves on $\widetilde{X}$ and let $D = \sum
a_i(\pi_*C_i)$. Then $D$ is ample on $X$ iff for all $1 \leq i \leq n$, 
\[
\sum_{1 \leq j \leq n} a_j \left(C_i.C_j - \sum_{x \in \Sing X}c_1(x, C_i) \cdot c_1(x, C_j)\right) > 0
\]

\end{lemma}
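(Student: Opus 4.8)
The plan is to reduce ampleness on $X$ to nefness on the minimal resolution $\widetilde X$ via the projection formula, and then to use Lemma~\ref{weak_dp_cone} to replace "nef" by "nonnegative on all $(-1)$- and $(-2)$-curves." First I would observe that, because $\pi$ is the minimal resolution, $D = \sum a_i(\pi_*C_i)$ is ample on $X$ if and only if its numerical pullback $\pi^*D$ is nef and big on $\widetilde X$ and has positive intersection with every $(-2)$-curve $E$ that is \emph{not} contracted to a point of $X$ lying in $\Supp D$ — but since $\pi^*D$ by construction satisfies $\pi^*D\cdot E = 0$ for every $\pi$-exceptional curve $E$, and since $\pi^*D$ is automatically big whenever $D$ is effective and nonzero (as $(\pi^*D)^2 = D^2 > 0$), the ampleness of $D$ is equivalent to: $\pi^*D$ is nef, and $\pi^*D$ meets positively every $(-1)$-curve $C_i$ appearing with nonzero coefficient. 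Actually the clean statement is simpler: $D$ ample on $X$ $\iff$ $\pi^*D$ is in the interior of the nef cone of $\widetilde X$ modulo the span of the exceptional curves, which by the Kleiman criterion (dualizing Lemma~\ref{weak_dp_cone}) amounts to $\pi^*D\cdot C > 0$ for every $(-1)$-curve $C$ (the $(-2)$-curve conditions being automatic since $\pi^*D$ is $\pi$-trivial, hence $\pi^*D\cdot E = 0$ for every $(-2)$-curve $E$, which together with bigness still gives ampleness of the pushforward by Nakai–Moishezon on the singular surface).

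Next I would compute $\pi^*D\cdot C_i$ explicitly. By definition of the numerical pullback, $\pi^*(\pi_*C_j) = C_j + (\text{correction supported on exceptional locus})$, and the correction term is exactly governed by the local first Chern classes: one has $\pi^*(\pi_*C_j) = C_j + \sum_{x\in\Sing X} \big(\text{the }\pi\text{-exceptional }\Q\text{-divisor }Z_{x,j}\big)$, where $Z_{x,j}$ is determined by $Z_{x,j}\cdot E = -C_j\cdot E$ for all exceptional $E$ over $x$. Comparing with the local Chern class formalism of the previous section, $Z_{x,j}$ is precisely $-c_1(x, C_j)$ in the notation of \cite{Langer-Chern}, since $c_1(x,C_j)$ is the exceptional part of $c_1(\mathcal O(C_j))$ relative to $\pi^*c_1(\pi_{[*]}\mathcal O(C_j)) = \pi^*\mathcal O(\pi_*C_j)$. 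Therefore
\[
\pi^*(\pi_*C_j)\cdot C_i \;=\; C_i\cdot C_j \;-\; \sum_{x\in\Sing X} c_1(x,C_i)\cdot c_1(x,C_j),
\]
using that $c_1(x,C_i)$ is supported over $x$ and that intersecting $C_i$ with the correction $-c_1(x,C_j)$ picks out $-c_1(x,C_i)\cdot c_1(x,C_j)$ once we write $C_i = \pi^*(\pi_*C_i) - \sum_x c_1(x,C_i)$ and use $\pi^*(\text{anything})\cdot(\text{exceptional}) = 0$. Summing against the coefficients $a_j$ gives exactly the displayed inequality.

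The remaining point is to check that the conditions "$\pi^*D\cdot C_i > 0$ for all $(-1)$-curves $C_i$" genuinely suffice for ampleness of $D$, i.e. that we get the $(-2)$-curve conditions for free. This is where I expect the only real subtlety: I would argue that $\pi^*D\cdot E = 0$ for every $(-2)$-curve $E$ (immediate from $\pi$-triviality), so $\pi^*D$ is nef as soon as it is nonnegative on all $(-1)$-curves, by Lemma~\ref{weak_dp_cone}; then $\pi^*D$ nef and big with $\pi^*D^\perp$ meeting the cone of curves exactly in the span of the $(-2)$-curves shows $\pi^*D$ is semiample and contracts precisely the $(-2)$-curves, so $D$ is ample on the anticanonical model $X$ by the base-point-free theorem (or directly by Nakai–Moishezon: $D^2 = (\pi^*D)^2 > 0$ and $D\cdot(\pi_*C) = \pi^*D\cdot C > 0$ for any curve $\pi_*C$ not contracted, since every curve on $X$ is the pushforward of a curve on $\widetilde X$ that is not a $(-2)$-curve). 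Conversely, if $D$ is ample then pulling back and intersecting with each $C_i$ forces the inequalities. The hypothesis excluding $\P^2$ and Hirzebruch surfaces is used precisely to invoke Lemma~\ref{weak_dp_cone}; the bigness input uses $D$ effective nonzero $\Rightarrow D^2>0$, which holds on a rank-one surface and more generally follows from the $(-1)$-curve positivity conditions themselves.
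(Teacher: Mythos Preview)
Your proposal is correct and follows essentially the same route as the paper: use Lemma~\ref{weak_dp_cone} to identify the generators of the cone of curves, observe that $\pi^*D$ is automatically trivial on the $(-2)$-curves, and compute $\pi^*(\pi_*C_j)\cdot C_i$ via the local Chern class decomposition $C_j = \pi^*(\pi_*C_j) + \sum_x c_1(x,C_j)$ together with $\pi^*(\cdot)\cdot(\text{exceptional})=0$. Two small remarks: (i) you have a sign slip in writing $C_i = \pi^*(\pi_*C_i) - \sum_x c_1(x,C_i)$ (it should be $+$), though your final formula is correct; (ii) the paper avoids your detour through bigness and $D^2>0$ by pushing the cone of curves forward to $X$ and applying Kleiman's criterion there directly---since $\overline{NE}(X)$ is generated by the $\pi_*C_i$, ampleness is simply $D\cdot\pi_*C_i>0$, i.e.\ $\pi^*D\cdot C_i>0$, with no separate self-intersection check needed.
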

\begin{proof}
By Lemma \ref{weak_dp_cone}, the cone of curves of $\widetilde{X}$ is
spanned by $(-1)$- and $(-2)$-curves. $\pi$ contracts all $(-2)$-curves,
so the cone of curves of $X$ is spanned by images of $(-1)$-curves. So
any divisor $D$ on $X$ can be written as a linear combination of images
of $(-1)$-curves. Furthermore, $D$ is ample if and only if $(\pi^* D).
C_i > 0$ for $1 \leq i \leq n$ where $C_1, \dots C_n$ are the
$(-1)$-curves on $\widetilde{X}$. 

Let $D = \sum a_j(\pi_*C_j)$. We have $\pi^*\pi_*C_j = C_j - \sum_{x \in
\text{Sing} X} c_1(x, C_j)$. Furthermore, note that for any singular
point $x$, $C_i - c_1(x, C_i)$ does not contain any of the exceptional
curves over $x$, so $(C_i - c_1(x, C_i)) \cdot c_1(x, C_j) = 0$. Thus
$C_i \cdot c_1(x, C_j) = c_1(x, C_j) \cdot c_1(x, C_j)$. So 
\[(\pi^*\pi_*C_i)\cdot C_j = C_i\cdot C_j - \sum_{x \in \text{Sing}X}c_1(x,C_i)\cdot c_1(x, C_j).\]
Thus 
\[
(\pi^*D) \cdot C_i = \sum_{1 \leq j \leq n} a_j \left(C_i \cdot C_j - \sum_{x \in \Sing X}c_1(x, C_i)\cdot c_1(x, C_j)\right)
\]
which gives us the desired result.
\end{proof}

\begin{proposition}
Let $X$ be one of the following surfaces: $S(A_4)$, $S(D_5)$,
$S(A_1+A_5)$, $S(E_6)$, $S(A_7)$, $S(A_1+D_6)$, $S(E_7)$, $S(A_2+A_5)$,
$S(D_8)$, $S(A_1+A_7)$, $S(A_8)$. Then there exists an ample Weil
divisor $D$ on $X$ such that $\chi(X, \Omega^{[1]}(D)) < 0$, so $X$
fails Bott vanishing. In particular, $X$ does not admit an int-amplified
endomorphism. 

\end{proposition}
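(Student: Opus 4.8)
The strategy is a finite, computer-aided case analysis: for each of the eleven surfaces $X$ in the list, exhibit a single ample Weil divisor $D$ on (the anticanonical model) $X$ with $\chi(X, \Omega^{[1]}(D)) < 0$. Once such a $D$ is found, since $h^0$ and $h^2$ contribute nonnegatively to $\chi$, negativity of $\chi$ forces $h^1(X, \Omega^{[1]}(D)) > 0$, so Bott vanishing fails; then by \cite[Theorem C]{Kawakami-Totaro} $X$ admits no int-amplified endomorphism. So the entire content is producing, for each $X$, one witness divisor together with the Riemann--Roch computation certifying $\chi < 0$.

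**Carrying it out.** First I would fix, for each surface, its minimal resolution $\widetilde X$ together with the configuration of $(-1)$- and $(-2)$-curves (available from \cite{Miyanishi-Zhang} in degree $1$ and from the standard weak del Pezzo classification otherwise), and record the intersection matrix of the $(-1)$-curves $C_1, \dots, C_n$ and the local data of each singular point. Next, I would take candidate divisors of the form $\widetilde D = \sum a_i C_i$ with small nonnegative integer coefficients and push them forward to $D = \pi_* \widetilde D$ on $X$. For each candidate I would check ampleness: in the Picard rank $1$ cases this is automatic once $D$ is effective and nonzero, and in the remaining cases (if any are not rank $1$) one applies the criterion of Lemma~\ref{ample_criterion}. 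Then I would run the Riemann--Roch computation of the Corollary: compute $c_2(\Omega^{[1]}(D))$ via the formulas
\[
c_2(\Omega^{[1]}_X(\pi_*\widetilde D)) = c_2(\Omega^1_{\widetilde X}) + K_{\widetilde X}\cdot \widetilde D + \widetilde D^2 - \sum_{x \in \Sing X} c_2(x, \Omega^1_{\widetilde X}(\widetilde D)),
\]
together with $c_2(\Omega^1_{\widetilde X}) = 12 - \deg \widetilde X$ and the local second Chern class expansion already recorded, and compute the correction terms $a(x, \Omega^{[1]}(D))$ from Blache's formula $a(x, \Omega^{[1]}(D)) = a(x, \mathcal O(D)) + a(x, \mathcal O(K_X + D)) + \frac{2}{|G|} - k$ in terms of the fractional parts $\{c_1(x, D)\}$. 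Assembling these gives $\chi(X, \Omega^{[1]}(D)) = 2 + (K_X + 2D)\cdot D - c_2(\Omega^{[1]}(D)) + \sum_x a(x, \Omega^{[1]}(D))$, and I would verify this is negative. All of these steps are implemented in the accompanying code \cite{CodeRepo}, so in practice I would search over small coefficient vectors until a witness is found and then tabulate the resulting values.

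**The main obstacle.** The conceptual content is light; the real work is bookkeeping. The chief difficulty is getting the local invariants right: one must correctly pull back $\pi_* \widetilde D$ (so $\pi^* \pi_* C_i = C_i - \sum_x c_1(x, C_i)$, with the $c_1(x, C_i) \in \mathrm{CH}^1(\widetilde X, \Q)_{\mathbb{Q}}$ determined by solving the linear system that makes $\pi^*\pi_* C_i$ orthogonal to every exceptional curve), then extract the fractional parts that feed into Blache's $a(x, -)$ formula, and finally identify $k$, the number of copies of $\mathcal O_X$ in the indecomposable decomposition of the relevant reflexive module over the ADE singularity. For $E_6$, $E_7$, $D_6$, $D_8$ and the larger $A_n$ these local computations are delicate, which is precisely why they are automated. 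Once the program is trusted, the proof reduces to displaying, for each of the eleven surfaces, the chosen $\widetilde D$ and the resulting value of $\chi(X, \Omega^{[1]}(D))$, and observing it is negative; I would present these in a short table.
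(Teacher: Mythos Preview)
Your proposal is essentially identical to the paper's own proof: exhibit, for each surface, a witness ample Weil divisor (pushed forward from a combination of $(-1)$-curves on the minimal resolution), compute $\chi(X,\Omega^{[1]}(D))$ via the Riemann--Roch formula for normal surfaces with the local invariants $c_2(x,-)$ and $a(x,-)$, and defer the bulk of the arithmetic to the accompanying code; the paper even works out $S(A_4)$ by hand exactly as you sketch. One small correction: in your formula for $a(x,\Omega^{[1]}(D))$ the term should be $\tfrac{1}{|G|}$, not $\tfrac{2}{|G|}$, since $\rk\mathcal{O}(D)=1$ (and note all eleven surfaces here have Picard rank $1$, so ampleness is automatic for nonzero effective $D$).
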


\begin{proof}
The computation of the Euler characteristic using the Riemann-Roch
formula for all these surfaces is implemented in the code repository
associated to this paper \cite{CodeRepo}. In particular, the code repository
contains, for each surface, the coefficients $a_i$ defining a specific
divisor $\pi_*(\sum a_iC_i)$. We describe the computation in detail
for $S(A_4)$; the rest are similar. 

Let $X = S(A_4)$ and $\pi: \widetilde{X} \to X$ be its minimal resolution.
Then $\widetilde{X}$ is the weak del Pezzo surface 5F in
\cite{Martin-Stadlmayr}. It has one $(-1)$-curve, and the arrangement of
$(-1)$- and $(-2)$-curves looks like
\begin{center}
\begin{tikzpicture}[scale=1.5]
\draw (0,0) -- (2,1);
\draw (1,1) -- (3,0);
\draw (2,0) -- (4,1);
\draw (3,1) -- (5,0);
\draw[dashed] (2.4,1) -- (4.4,0);
\end{tikzpicture}
\end{center}

Here the dashed line represents the $(-1)$-curve and the solid lines
represent $(-2)$-curves. Let $E_1$, $E_2$, $E_3$ and $E_4$ be the
$(-2)$-curves from left to right and let $C \subset \widetilde{X}$ be the
$(-1)$-curve (which meets $E_3$). Set $D = \pi_*C$. The degree of $X$ is
$(-K_X)\cdot (-K_X) = 5$. Also, $(-K_X).D = (-K_{\widetilde{X}}).C = 1$ by
the adjunction formula, since $C$ is a rational curve and $C^2 = -1$.
Since $X$ is Picard rank $1$, $-K_X \equiv 5D$, so $D$ is an
ample Weil divisor on $X$.

$X$ has one singular point $x$ which $D$ meets. To compute the Euler
characteristic $\chi(X, \Omega^{[1]}_X(D))$ with the Riemann-Roch
formula, let us first compute the local invariants $c_2(x,
\Omega^{[1]}_X(D))$ and $a(x, \Omega^{[1]}_X(D))$. First let us compute
the numerical lift $\pi^*D$. We know that $\pi^*D = C + \sum a_iE_i$ and
that $\pi^*D \cdot E_i = 0$. Furthermore $C\cdot E_i = 0$ for $i = 1, 2,
4$ and $C\cdot E_3 = 1$. Solving this system of equations, we obtain 
\[ \pi^*D = C + \frac25 E_1 + \frac 45 E_2 + \frac65 E_3 + \frac35E_4\]
So we have 
\[ 
a(x, \mathcal{O}(D)) = -\frac12\left(\frac25 E_1 + \frac 45 E_2 + \frac15 E_3 + \frac35E_4\right)\cdot (C +  E_3 - K_{\widetilde{X}}) = -\frac12 \left(\frac15 + (\frac45-\frac25+\frac35) \right) = -\frac35.
\]
so $a(x, \Omega^{[1]}(D)) = 2(-\frac35) + \frac15-0 = -1$.

Next, $c_1(x, \mathcal{O}(C))^2 = C^2-D^2 = -1-\frac15(-K_X).C = -\frac65$.
Thus $c_2(x, \Omega^1_{\widetilde{X}}) = 1+4-\frac15-\frac65=\frac{18}{5}$.

Thus $c_2(\Omega^{[1]}(D)) = (12-5)-1-1 -\frac{18}{5}= \frac75$.

Putting it all together we have

\[ \chi(X, \Omega^{[1]}(D)) = 2 + \left(-1 + \frac25\right) - \frac75 -1
= -1 < 0\]

as desired.

\end{proof}

\begin{proposition} Let $Y$ be either $S(2A_1+D_6)$ or $S(A_3+D_5)$.
Then $Y$ does not admit an int-amplified endomorphism.
\end{proposition}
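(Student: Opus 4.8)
The plan is to handle $S(2A_1+D_6)$ and $S(A_3+D_5)$ by the same strategy used in the previous proposition: exhibit an ample Weil divisor $D = \pi_*(\sum a_i C_i)$, written as a combination of images of the $(-1)$-curves on the minimal resolution $\widetilde{X}$, for which the Riemann--Roch formula gives $\chi(X, \Omega^{[1]}(D)) < 0$, so that Bott vanishing fails and hence (by \cite[Theorem C]{Kawakami-Totaro}) the surface admits no int-amplified endomorphism. First I would locate these two surfaces in the classification: $S(2A_1+D_6)$ is a degree-$1$ Gorenstein del Pezzo, and $S(A_3+D_5)$ a degree-$1$ one as well (both appear in the degree-$1$ list), and I would read off the $(-1)/(-2)$-curve configuration of their minimal resolutions from Miyanishi--Zhang \cite{Miyanishi-Zhang} (the degree-$1$ case is treated there explicitly). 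In particular I need the incidence of the $(-1)$-curves with the chains of $(-2)$-curves resolving the $2A_1+D_6$, respectively $A_3+D_5$, singularities.

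Next, for a chosen candidate divisor $\widetilde{D} = \sum a_i C_i$ (with $C_i$ the $(-1)$-curves), I would compute the numerical pullback $\pi^*D = \widetilde{D} + \sum b_j E_j$ by solving the linear system $\pi^*D \cdot E_j = 0$ over the exceptional curves $E_j$, which amounts to inverting the (negative definite) intersection matrices of the $A_1$, $A_1$, $D_6$ chains (resp.\ the $A_3$ and $D_5$ chains). From this I get the local Chern classes $c_1(x, \widetilde{D})$, hence $c_1(x,\widetilde{D})^2 = \widetilde{D}^2 - D^2$ where $D^2 = (\pi^*D)^2$ can itself be computed since $X$ has Picard rank $1$ (so $D$ is a rational multiple of $-K_X$ and $D^2$ is determined by $-K_X \cdot D = -K_{\widetilde{X}} \cdot \widetilde{D}$ and $(-K_X)^2 = 1$). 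I then assemble $c_2(\Omega^{[1]}(D)) = (12 - \deg \widetilde{X}) + K_{\widetilde{X}}\cdot\widetilde{D} + \widetilde{D}^2 - \sum_x\big(c_2(\Omega^1_{\widetilde{X}}) + c_1(x,\widetilde{D})^2\big)$ using $c_2(\Omega^1_{\widetilde X}) = 1 + \#\mathrm{Components}(E_x) - 1/|G|$ from \cite[Theorem 7.3]{Blache}, and the correction term $\sum_x a(x, \Omega^{[1]}(D))$ via $a(x,\mathcal{O}(D)) = \tfrac12\{c_1(x,D)\}\cdot(\lfloor\pi^*D\rfloor - K_{\widetilde X})$ together with the formula $a(x,\Omega^{[1]}(D)) = 2\,a(x,\mathcal O(D)) + \tfrac{2}{|G|} - k$ (being careful with $k$, the number of $\mathcal{O}_X$-summands). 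Plugging into $\chi(X,\Omega^{[1]}(D)) = 2 + (K_X + 2D)\cdot D - c_2(\Omega^{[1]}(D)) + \sum_x a(x,\Omega^{[1]}(D))$ should yield a negative number for a suitable choice of the $a_i$.

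I would also need to verify ampleness of $D$. In the easiest situation, if $\widetilde D$ can be chosen so that $D = \pi_*\widetilde D$ is effective and nonzero, then since $X$ has Picard rank $1$, $D$ is automatically ample. Otherwise I would invoke Lemma \ref{ample_criterion}: check that $\sum_j a_j\big(C_i\cdot C_j - \sum_x c_1(x,C_i)\cdot c_1(x,C_j)\big) > 0$ for every $(-1)$-curve $C_i$, which is a finite linear-algebra check once the curve configuration is known. (The hypothesis of that lemma is met, as the minimal resolutions of these two surfaces are genuine blowups of $\P^2$, not $\P^2$ or a Hirzebruch surface.)

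The main obstacle I expect is purely bookkeeping: correctly extracting the $(-1)/(-2)$-curve incidence graphs for these two specific degree-$1$ surfaces from \cite{Miyanishi-Zhang}, inverting the right intersection matrices, and — most delicately — getting the local term $a(x,\Omega^{[1]}(D))$ right, in particular the integer $k$ counting trivial summands of $\Omega^{[1]}(D)$ near each singular point and the branch-point contributions in the $D$-type resolutions. These are exactly the computations automated in the code repository \cite{CodeRepo}, so in the write-up I would state the chosen coefficients $a_i$ explicitly, carry out one representative case (say $S(2A_1+D_6)$) in the same level of detail as the $S(A_4)$ computation above, and refer to \cite{CodeRepo} for the verification of the other. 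There is no conceptual difficulty beyond what has already been set up; the only risk is an arithmetic slip in the fractional-part and local-Chern-class computations.
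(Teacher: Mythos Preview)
Your approach is not the one the paper takes, and the difference is substantive. You propose to work directly on $Y$ and search for an ample Weil divisor $D$ with $\chi(Y,\Omega^{[1]}(D))<0$. The paper instead passes to the \emph{quasi-universal cover} $X$ of $Y$ (via \cite{Miyanishi-Zhang}): for $S(A_3+D_5)$ this is a rank-$4$ Gorenstein del Pezzo with a single $A_2$ singularity, and for $S(2A_1+D_6)$ it is a degree-$4$ surface with a single $A_3$ singularity (two possible isomorphism classes, both treated). The paper then finds a Weil divisor on $X$ with negative $\chi$, so $X$ fails Bott vanishing and hence admits no int-amplified endomorphism; finally Proposition~\ref{lifting_lemma} transports this conclusion down to $Y$. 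Because $X$ has Picard rank $>1$, ampleness there is checked via Lemma~\ref{ample_criterion} rather than by the rank-$1$ shortcut.

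This is not a cosmetic distinction. The paper segregates $S(2A_1+D_6)$ and $S(A_3+D_5)$ into a separate proposition, and in the final summary lists them under ``universal covers that fail Bott vanishing for a Weil divisor'' rather than ``fail Bott vanishing for a Weil divisor,'' precisely because the direct Riemann--Roch search on $Y$ itself did not produce a witness. Your sentence ``plugging into $\chi$ \dots\ should yield a negative number for a suitable choice of the $a_i$'' is the crux of your argument, and it is an unsupported hope: on the evidence of the paper's own computations (and the accompanying code search), no such divisor on $Y$ was found, and it is entirely possible that $S(2A_1+D_6)$ and $S(A_3+D_5)$ actually satisfy Bott vanishing for all Weil divisors. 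If so, your strategy cannot succeed even in principle. The missing idea is the lift to the cover.

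A minor slip in your formulas: with $\mathcal{E}=\mathcal{O}(D)$ one has $\rk\mathcal{E}=1$, so the correction term is $\tfrac{1}{|G|}$, not $\tfrac{2}{|G|}$, in $a(x,\Omega^{[1]}(D)) = a(x,\mathcal{O}(D)) + a(x,\mathcal{O}(D+K_X)) + \tfrac{1}{|G|} - k$.
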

\begin{proof}
First let $Y = S(A_3+D_5)$ first. Then by \cite[p.71]{Miyanishi-Zhang}
the universal cover $X$ of $Y$ is a rank $4$ Gorenstein del Pezzo with
singularity type $A_2$. There is only one such Gorenstein del Pezzo
surface: it satisfies Bott vanishing for Cartier divisors but we show it
fails Bott vanishing for a Weil divisor. Let $\widetilde{X}$ be its minimal
resolution. We can construct $\widetilde{X}$ by blowing up $\P^2$ at four
points, three of which lie on a line, and then blowing up a point $P$ on
the exceptional curve of one of the points on the line such that $P$
doesn't lie on any other $(-1)$-curve. The arrangement of $(-1)-$ and
$(-2)-$curves on $\widetilde{X}$ looks like:
\begin{center}
\includegraphics[scale=0.5]{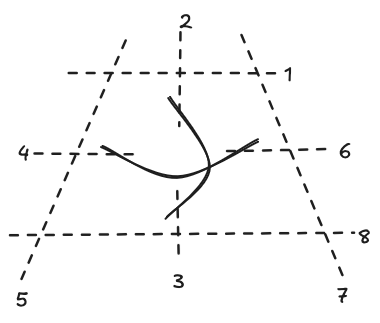}
\end{center}
As before, the dashed lines represent $(-1)$-curves and the solid lines
represent $(-2)$-curves. Now let $\tilde{D} =
2C_1-2C_2-4C_3+C_4+2C_5+3C_6+4C_7+C_8$. This particular divisor was found
through a computer search. Then one can show using Lemma \ref{ample_criterion}
that $D = \pi_*\tilde{D}$ is ample on $X$. Applying the Riemann-Roch formula,
we can compute $\chi(\Omega^{[1]}(D)) < 0$, so $X$ fails Bott vanishing for
$D$. Thus $X$ cannot admit an int-amplified endomorphism. By Proposition
\ref{lifting_lemma} this implies $Y$ does not admit an int-amplified
endomorphism. 

Now let $Y = S(2A_1+D_6)$. The quasi-universal cover $S(2A_1+D_6)$ is of
degree $4$ and has an $A_3$ singularity. There are two such Gorenstein
del Pezzo surfaces: both satisfy Bott vanishing for Cartier divisors
(their minimal resolutions are 4D and 4E in \cite{Martin-Stadlmayr}),
but using the same method, one can show they both fail Bott vanishing
for some Weil divisor. So without knowing which is actually the
universal cover we can nevertheless conclude that $Y$ cannot admit an
int-amplified endomorphism.

All of these specific Riemann-Roch computations can be found in the
associated code repository, as well functions for doing searches for
divisors in the ample cone \cite{CodeRepo}. The repository also
implements Lemma \ref{ample_criterion} for verifying divisors are ample
on Gorenstein del Pezzo surfaces of rank $>1$.

\end{proof}

In summary we have:
\begin{itemize}
    \item $\P^2$, $S(A_1)$, $S(A_1+A_2)$, $S(2A_1+A_3)$ and $S(3A_2)$
    are toric 
    \item $S(3A_1+D_4)$, $S(A_1+2A_3)$, $S(A_1+A_2+A_5)$, $S(4A_2)$ and
    $S(2A_1+2A_3)$ are quotients of toric varieties (of the kind
    described in Theorem \ref{main_theorem})
    \item $S(E_8)$, $S(A_1+E_7)$, $S(A_2+E_6)$ and all surfaces
    $S_t(2D_4)$ fail Bott vanishing for a Cartier divisor, and hence do
    not admit int-amplified endomorphisms
    \item $S(2A_4)$, $S(D_8)$, $S(A_1+A_7)$, $S(A_8)$, $S'(A_2+E_6)$ and
    $S'(A_1+E_7)$ do not admit int-amplified endomorphisms, by lifting
    to universal covers
    \item $S(A_4)$, $S(D_5)$, $S(A_1+A_5)$, $S(E_6)$, $S(A_7)$,
    $S(A_1+D_6)$, $S(E_7)$ and $S(A_2+A_5)$ fail Bott vanishing for a
    Weil divisor, and thus do not admit int-amplified endomorphisms
    \item $S(2A_1+D_6)$ and $S(A_3+D_5)$ have universal covers that fail
    Bott vanishing for a Weil divisor, and thus do not admit
    int-amplified endomorphisms
\end{itemize}

This addresses all surfaces listed in the classification of rank $1$
Gorenstein del Pezzo surfaces except $S'(E_8)$. Thus we conclude Theorem
\ref{main_theorem}. $S'(E_8)$ is not amenable to any of the techniques
in this paper (see also \cite{Gurjar}, which characterizes normal
Gorenstein surfaces dominated by $\P^2$, except for $S'(E_8)$).
$S'(E_8)$ satisfies Bott vanishing for Weil divisors (by \cite{Baker},
it satisfies Bott vanishing for Cartier divisors, and all Weil divisors
are Cartier by \cite[p.71]{Miyanishi-Zhang}) and its smooth locus has
trivial fundamental group, so it has no nontrivial covers. Thus, this
last surface remains open for future investigation.

\bibliographystyle{aomalpha}{}
\bibliography {references}   

@book {Dolgachev,
    AUTHOR = {Dolgachev, Igor V.},
     TITLE = {Classical algebraic geometry: \textit{a modern view}},
 PUBLISHER = {Cambridge University Press, Cambridge},
      YEAR = {2012},
     PAGES = {xii+639},
      ISBN = {978-1-107-01765-8},
   MRCLASS = {14-02 (14-01)},
  MRNUMBER = {2964027},
MRREVIEWER = {Arnaud\ Beauville},
       DOI = {10.1017/CBO9781139084437},
       URL = {https://doi.org/10.1017/CBO9781139084437},
}

@article {Braun,
    AUTHOR = {Braun, Lukas},
     TITLE = {The local fundamental group of a {K}awamata log terminal
              singularity is finite},
   JOURNAL = {Invent. Math.},
  FJOURNAL = {Inventiones Mathematicae},
    VOLUME = {226},
      YEAR = {2021},
    NUMBER = {3},
     PAGES = {845--896},
      ISSN = {0020-9910,1432-1297},
   MRCLASS = {14F35 (14B05 14J45 32S50)},
  MRNUMBER = {4337973},
       DOI = {10.1007/s00222-021-01062-0},
       URL = {https://doi.org/10.1007/s00222-021-01062-0},
}

@article {DuVal,
    AUTHOR = {Cheltsov, Ivan and Prokhorov, Yuri},
     TITLE = {Del {P}ezzo surfaces with infinite automorphism groups},
   JOURNAL = {Algebr. Geom.},
  FJOURNAL = {Algebraic Geometry},
    VOLUME = {8},
      YEAR = {2021},
    NUMBER = {3},
     PAGES = {319--357},
      ISSN = {2313-1691,2214-2584},
   MRCLASS = {14J50 (14J17 14J26)},
  MRNUMBER = {4206439},
MRREVIEWER = {Tatiana\ M.\ Bandman},
       DOI = {10.14231/ag-2021-008},
       URL = {https://doi.org/10.14231/ag-2021-008},
}

@article {Gurjar,
    AUTHOR = {Gurjar, R. V. and Pradeep, C. R. and Zhang, D.-Q.},
     TITLE = {On {G}orenstein surfaces dominated by {\(\mathbf{P}^2\)}},
   JOURNAL = {Nagoya Math. J.},
  FJOURNAL = {Nagoya Mathematical Journal},
    VOLUME = {168},
      YEAR = {2002},
     PAGES = {41--63},
      ISSN = {0027-7630,2152-6842},
   MRCLASS = {14J26 (14E20)},
  MRNUMBER = {1942393},
MRREVIEWER = {M.\ Miyanishi},
       DOI = {10.1017/S0027763000008345},
       URL = {https://doi.org/10.1017/S0027763000008345},
}

@article {Nakayama02,
    AUTHOR = {Nakayama, Noboru},
     TITLE = {Ruled surfaces with non-trivial surjective endomorphisms},
   JOURNAL = {Kyushu J. Math.},
  FJOURNAL = {Kyushu Journal of Mathematics},
    VOLUME = {56},
      YEAR = {2002},
    NUMBER = {2},
     PAGES = {433--446},
      ISSN = {1340-6116,1883-2032},
   MRCLASS = {14J26},
  MRNUMBER = {1934136},
MRREVIEWER = {Sandra\ Di Rocco},
       DOI = {10.2206/kyushujm.56.433},
       URL = {https://doi.org/10.2206/kyushujm.56.433},
}

@article {Fakhruddin,
    AUTHOR = {Fakhruddin, Najmuddin},
     TITLE = {Questions on self maps of algebraic varieties},
   JOURNAL = {J. Ramanujan Math. Soc.},
  FJOURNAL = {Journal of the Ramanujan Mathematical Society},
    VOLUME = {18},
      YEAR = {2003},
    NUMBER = {2},
     PAGES = {109--122},
      ISSN = {0970-1249,2320-3110},
   MRCLASS = {14G05 (37A45)},
  MRNUMBER = {1995861},
MRREVIEWER = {Yuri\ Tschinkel},
}

@book {KollarMori,
    AUTHOR = {Koll\'ar, J\'anos and Mori, Shigefumi},
     TITLE = {Birational geometry of algebraic varieties},
    SERIES = {Cambridge Tracts in Mathematics},
    VOLUME = {134},
      NOTE = {With the collaboration of C. H. Clemens and A. Corti,
              Translated from the 1998 Japanese original},
 PUBLISHER = {Cambridge University Press, Cambridge},
      YEAR = {1998},
     PAGES = {viii+254},
      ISBN = {0-521-63277-3},
   MRCLASS = {14E30},
  MRNUMBER = {1658959},
MRREVIEWER = {Mark\ Gross},
       DOI = {10.1017/CBO9780511662560},
       URL = {https://doi.org/10.1017/CBO9780511662560},
}

@misc{CodeRepo,
author = {Joshi, Rohan},
title = {{RR-del-Pezzo}},
note = {Available at \url{https://github.com/rohansjoshi/RR-del-Pezzo}},
year = {2025},
}

@article {ToricGDP,
    AUTHOR = {Kikuchi, T. and Nakano, T.},
     TITLE = {On the projective embeddings of {G}orenstein toric del {P}ezzo
              surfaces},
   JOURNAL = {Illinois J. Math.},
  FJOURNAL = {Illinois Journal of Mathematics},
    VOLUME = {53},
      YEAR = {2009},
    NUMBER = {4},
     PAGES = {1051--1059},
      ISSN = {0019-2082,1945-6581},
   MRCLASS = {14J26 (13D02 14M10 14M25)},
  MRNUMBER = {2741177},
MRREVIEWER = {Carlos\ D'Andrea},
       URL = {http://projecteuclid.org/euclid.ijm/1290435338},
}

@article {Kebekus,
    AUTHOR = {Greb, Daniel and Kebekus, Stefan and Peternell, Thomas},
     TITLE = {Reflexive differential forms on singular spaces. {G}eometry
              and cohomology},
   JOURNAL = {J. Reine Angew. Math.},
  FJOURNAL = {Journal f\"ur die Reine und Angewandte Mathematik. [Crelle's
              Journal]},
    VOLUME = {697},
      YEAR = {2014},
     PAGES = {57--89},
      ISSN = {0075-4102,1435-5345},
   MRCLASS = {14F10 (14C20 14E15 32C25)},
  MRNUMBER = {3281652},
MRREVIEWER = {Kelly\ Jabbusch},
       DOI = {10.1515/crelle-2012-0097},
       URL = {https://doi.org/10.1515/crelle-2012-0097},
}

@article {Meng-Zhang-Zhong,
    AUTHOR = {Meng, Sheng and Zhang, De-Qi and Zhong, Guolei},
     TITLE = {Non-isomorphic endomorphisms of {F}ano threefolds},
   JOURNAL = {Math. Ann.},
  FJOURNAL = {Mathematische Annalen},
    VOLUME = {383},
      YEAR = {2022},
    NUMBER = {3-4},
     PAGES = {1567--1596},
      ISSN = {0025-5831,1432-1807},
   MRCLASS = {14J50 (08A35 14E30 14J45 14M25 20K30 32H50)},
  MRNUMBER = {4458408},
MRREVIEWER = {Sergio\ Mathew\ Da Silva},
       DOI = {10.1007/s00208-021-02274-8},
       URL = {https://doi.org/10.1007/s00208-021-02274-8},
}

@article {Meng-Rigidity,
    AUTHOR = {Meng, Sheng and Zhong, Guolei},
     TITLE = {Rigidity of rationally connected smooth projective varieties
              from dynamical viewpoints},
   JOURNAL = {Math. Res. Lett.},
  FJOURNAL = {Mathematical Research Letters},
    VOLUME = {30},
      YEAR = {2023},
    NUMBER = {2},
     PAGES = {589--610},
      ISSN = {1073-2780,1945-001X},
   MRCLASS = {14E05 (14M22 14M25 37F80)},
  MRNUMBER = {4649641},
MRREVIEWER = {Krishna\ Hanumanthu},
       DOI = {10.4310/mrl.2023.v30.n2.a10},
       URL = {https://doi.org/10.4310/mrl.2023.v30.n2.a10},
}

@article {Meng-Building-Blocks,
    AUTHOR = {Meng, Sheng},
     TITLE = {Building blocks of amplified endomorphisms of normal
              projective varieties},
   JOURNAL = {Math. Z.},
  FJOURNAL = {Mathematische Zeitschrift},
    VOLUME = {294},
      YEAR = {2020},
    NUMBER = {3-4},
     PAGES = {1727--1747},
      ISSN = {0025-5874,1432-1823},
   MRCLASS = {14E30 (08A35 32H50)},
  MRNUMBER = {4074056},
MRREVIEWER = {Haidong\ Liu},
       DOI = {10.1007/s00209-019-02316-7},
       URL = {https://doi.org/10.1007/s00209-019-02316-7},
}

@techreport{Nakayama,
  author      = {Noboru Nakayama},
  title       = {Outstanding problems on normal projective surfaces admitting non-isomorphic surjective endomorphisms},
  institution = {Research Institute for Mathematical Sciences, Kyoto University},
  type        = {RIMS Preprint},
  number      = {1943},
  year        = {2021},
  url         = {https://www.kurims.kyoto-u.ac.jp/preprint/file/RIMS1943.pdf},
  note        = {Available at \url{https://www.kurims.kyoto-u.ac.jp/preprint/file/RIMS1943.pdf}},
}

@techreport{NakayamaStruct,
  author      = {Noboru Nakayama},
  title       = {On the structure of normal projective surfaces admitting non-isomorphic surjective endomorphisms},
  institution = {Research Institute for Mathematical Sciences, Kyoto University},
  type        = {RIMS Preprint},
  number      = {1934},
  year        = {2020},
  url         = {https://www.kurims.kyoto-u.ac.jp/preprint/file/RIMS1934.pdf},
  note        = {Available at \url{https://www.kurims.kyoto-u.ac.jp/preprint/file/RIMS1934.pdf}},
}

@article {Lacini,
    AUTHOR = {Lacini, Justin},
     TITLE = {On rank one log del {P}ezzo surfaces in characteristic
              different from two and three},
   JOURNAL = {Adv. Math.},
  FJOURNAL = {Advances in Mathematics},
    VOLUME = {442},
      YEAR = {2024},
     PAGES = {Paper No. 109568, 77},
      ISSN = {0001-8708,1090-2082},
   MRCLASS = {14J26 (14J45)},
  MRNUMBER = {4712257},
MRREVIEWER = {J\polhk edrzej\ Garnek},
       DOI = {10.1016/j.aim.2024.109568},
       URL = {https://doi.org/10.1016/j.aim.2024.109568},
}

@misc{Langer,
      title={Intersection theory and {C}hern classes on normal varieties}, 
      author={Adrian Langer},
      year={2023},
      eprint={2210.08766},
      archivePrefix={arXiv},
      primaryClass={math.AG},
      url={https://arxiv.org/abs/2210.08766}, 
      howpublished={\url{arXiv:2210.08766}},
}

@article {Langer-Chern,
    AUTHOR = {Langer, Adrian},
     TITLE = {Chern classes of reflexive sheaves on normal surfaces},
   JOURNAL = {Math. Z.},
  FJOURNAL = {Mathematische Zeitschrift},
    VOLUME = {235},
      YEAR = {2000},
    NUMBER = {3},
     PAGES = {591--614},
      ISSN = {0025-5874,1432-1823},
   MRCLASS = {14C17 (14C40)},
  MRNUMBER = {1800214},
MRREVIEWER = {Jean-Marc\ Dr\'ezet},
       DOI = {10.1007/s002090000149},
       URL = {https://doi.org/10.1007/s002090000149},
}

@article {Blache,
    AUTHOR = {Blache, R.},
     TITLE = {Riemann-{R}och theorem for normal surfaces and applications},
   JOURNAL = {Abh. Math. Sem. Univ. Hamburg},
  FJOURNAL = {Abhandlungen aus dem Mathematischen Seminar der Universit\"at
              Hamburg},
    VOLUME = {65},
      YEAR = {1995},
     PAGES = {307--340},
      ISSN = {0025-5858,1865-8784},
   MRCLASS = {14C40 (14J17 14J25 32J15)},
  MRNUMBER = {1359140},
MRREVIEWER = {Dimitrios\ I.\ Dais},
       DOI = {10.1007/BF02953338},
       URL = {https://doi.org/10.1007/BF02953338},
}

@article {Totaro-Log-Bott,
    AUTHOR = {Totaro, Burt},
     TITLE = {Endomorphisms of {F}ano 3-folds and log {B}ott vanishing},
   JOURNAL = {Math. Res. Lett.},
  FJOURNAL = {Mathematical Research Letters},
    VOLUME = {31},
      YEAR = {2024},
    NUMBER = {6},
     PAGES = {1879--1902},
      ISSN = {1073-2780,1945-001X},
   MRCLASS = {14J45 (14E05 14M20)},
  MRNUMBER = {4862363},
       DOI = {10.4310/mrl.250211003919},
       URL = {https://doi.org/10.4310/mrl.250211003919},
}

@article {Kawakami-Totaro,
    AUTHOR = {Kawakami, Tatsuro and Totaro, Burt},
     TITLE = {Endomorphisms of varieties and {B}ott vanishing},
   JOURNAL = {J. Algebraic Geom.},
  FJOURNAL = {Journal of Algebraic Geometry},
    VOLUME = {34},
      YEAR = {2025},
    NUMBER = {2},
     PAGES = {381--405},
      ISSN = {1056-3911,1534-7486},
   MRCLASS = {14E05 (13A35 14G17 14J45)},
  MRNUMBER = {4876293},
       URL = {https://doi.org/10.1090/jag/838},
}

@article {Miyanishi-Zhang,
    AUTHOR = {Miyanishi, M. and Zhang, D.-Q.},
     TITLE = {Gorenstein log del {P}ezzo surfaces of rank one},
   JOURNAL = {J. Algebra},
  FJOURNAL = {Journal of Algebra},
    VOLUME = {118},
      YEAR = {1988},
    NUMBER = {1},
     PAGES = {63--84},
      ISSN = {0021-8693},
   MRCLASS = {14J26 (14E20)},
  MRNUMBER = {961326},
MRREVIEWER = {David\ R.\ Morrison},
       DOI = {10.1016/0021-8693(88)90048-8},
       URL = {https://doi.org/10.1016/0021-8693(88)90048-8},
}

@article {Martin-Stadlmayr,
    AUTHOR = {Martin, Gebhard and Stadlmayr, Claudia},
     TITLE = {Weak del {P}ezzo surfaces with global vector fields},
   JOURNAL = {Geom. Topol.},
  FJOURNAL = {Geometry \& Topology},
    VOLUME = {28},
      YEAR = {2024},
    NUMBER = {8},
     PAGES = {3565--3641},
      ISSN = {1465-3060,1364-0380},
   MRCLASS = {14J26 (14E07 14J50 14L15)},
  MRNUMBER = {4843745},
       DOI = {10.2140/gt.2024.28.3565},
       URL = {https://doi.org/10.2140/gt.2024.28.3565},
}

@book {Baker,
    AUTHOR = {Baker, William},
     TITLE = {Bott vanishing and {K}awamata-{V}iehweg vanishing for
              algebraic surfaces},
      NOTE = {Thesis (Ph.D.)--University of California, Los Angeles},
 PUBLISHER = {ProQuest LLC, Ann Arbor, MI},
      YEAR = {2021},
     PAGES = {46},
      ISBN = {979-8516-09202-2},
   MRCLASS = {99-05},
  MRNUMBER = {4297611},
       URL =
              {http://gateway.proquest.com/openurl?url_ver=Z39.88-2004&rft_val_fmt=info:ofi/fmt:kev:mtx:dissertation&res_dat=xri:pqm&rft_dat=xri:pqdiss:28545423},
}

\end {document}